\documentclass[a4paper, 11pt,reqno]{amsart}

\setlength{\textwidth}{6.1in}
\setlength{\oddsidemargin}{0.1in}
\setlength{\evensidemargin}{0.1in}

\usepackage{amsmath, amssymb, graphicx}

\usepackage{amsthm}
\usepackage{amsfonts}
\usepackage{mathrsfs}
\usepackage{wasysym}
\usepackage{tikz-cd}
\usepackage{mathtools}

\usepackage{multirow}
\usepackage{multicol}

\usepackage{adjustbox}


\usepackage[OT2,OT1]{fontenc}
\newcommand\cyr
{
\renewcommand\rmdefault{wncyr}
\renewcommand\sfdefault{wncyss}
\renewcommand\encodingdefault{OT2}
\normalfont
\selectfont
}

\usepackage{xcolor}
\definecolor{darkgreen}{rgb}{0,0.50,0} 
\definecolor{darkred}{rgb}{0.55,0,0}
\definecolor{darkblue}{rgb}{0,0,0.6}
\definecolor{darkteal}{rgb}{0.0, 0.25, 0.5}
\usepackage[pdfborder={0 0 0},pagebackref,colorlinks,citecolor=darkgreen,linkcolor=darkteal,urlcolor=darkblue]{hyperref}

\usepackage{cleveref}
\crefname{appsec}{Appendix}{Appendices}
\crefname{section}{Section}{Sections}
\crefname{figure}{Figure}{Figures}

\theoremstyle{plain}
    \newtheorem{theorem}{Theorem}[section]
        \crefname{theorem}{Theorem}{Theorems}
    \newtheorem*{theorem*}{Theorem}
    \newtheorem{corollary}[theorem]{Corollary}
        \crefname{corollary}{Corollary}{Corollaries}
    \newtheorem*{corollary*}{Corollary}
    \newtheorem{proposition}[theorem]{Proposition}
        \crefname{proposition}{Proposition}{Propositions}
    \newtheorem*{proposition*}{Proposition}
    \newtheorem{lemma}[theorem]{Lemma}
        \crefname{lemma}{Lemma}{Lemmas}

\theoremstyle{definition}
    \newtheorem{definition}[theorem]{Definition}
        \crefname{definition}{Definition}{Definitions}
    \newtheorem{remark}[theorem]{Remark}
        \crefname{remark}{Remark}{Remarks}
    \newtheorem{example}[theorem]{Example}
        \crefname{example}{Example}{Examples}
    
        \crefname{notation}{Notation}{Notations}
    \newtheorem*{convention*}{Convention}
\newtheorem*{acknowledgements*}{Acknowledgements}
\newtheorem*{overview*}{Overview}

\setcounter{tocdepth}{3}

\let\oldtocsection=\tocsection

\let\oldtocsubsection=\tocsubsection

\let\oldtocsubsubsection=\tocsubsubsection

\renewcommand{\tocsection}[2]{\hspace{0em}\oldtocsection{#1}{#2}}
\renewcommand{\tocsubsection}[2]{\hspace{1em}\oldtocsubsection{#1}{#2}}
\renewcommand{\tocsubsubsection}[2]{\hspace{2em}\oldtocsubsubsection{#1}{#2}}

\setlength{\parindent}{0px}


\newcommand{\Th}{\mathrm{Th}}

\title{Cobordism of nested manifolds}
\author[A. Send\'{o}n Blanco]{Alba Send\'{o}n Blanco}
\address{Department of Mathematics, Vrije Universiteit Amsterdam}
\email{a.sendon.blanco@vu.nl}

\keywords{Cobordism groups, nested manifolds, links, normal structures, Pontryagin-Thom construction, (generalized) Whitehead products, Hilton-Milnor splitting.}
\makeatletter
\@namedef{subjclassname@2020}{\textup{2020} Mathematics Subject Classification}
\makeatother
\subjclass[2020]{Primary  57R19, 57R90; Secondary 57R15, 57K45, 55Q15}

\begin{document}

\begin{abstract}
We study cobordisms of nested manifolds, which are manifolds together with embedded submanifolds, which can themselves have embedded submanifolds, etc. We identify a nested analog of the Pontryagin-Thom construction. Moreover, when the highest-dimensional manifold has a normal bundle with a framed direction, we find spaces homotopy equivalent to the nested Pontryagin-Thom spaces that relate nested manifolds up to cobordism with links up to cobordism. This gives rise to nested cobordism invariants coming from previously studied cobordism invariants of links. In addition, we provide an alternative proof of a result by Wall about the splitting of the stable nested cobordism groups.
\end{abstract}

\maketitle

\setlength{\parskip}{0px}
\tableofcontents

\setlength{\parskip}{5px}

\section{Introduction}
We say that two closed smooth manifolds are cobordant if there exists a compact manifold bounding their disjoint union. The set of cobordism classes of closed manifolds of dimension $k$ forms a group $\Omega_k$ under disjoint union, and the cartesian product turns the direct sum of all these groups into a graded ring $\Omega_\ast$. The notion of cobordism was first introduced by Poincar\'{e} and developed in more detail by Pontryagin \cite{pontrjagin2007smooth} and Thom \cite{Thom}. In particular, Thom was able to relate the set $\mathrm{Cob}_k(M)$ of $k$-dimensional submanifolds of a fixed closed manifold $M$ up to cobordisms inside $M\times[0,1]$ with a homotopy-theoretic computation, and then calculated the cobordism groups $\Omega_k$ of $k$-manifolds and the cobordism ring $\Omega_\ast$.\par
Cobordism remains a central topic in algebraic topology. For example, Galatius, Madsen, Tillmann and Weiss determined the homotopy type of the cobordism category \cite{galatius2009homotopy}, which was applied to prove a higher-dimensional version of the Madsen-Weiss theorem by Galatius and Randal-Williams \cite{stablemoduli}. The cobordism category, which has manifolds as objects and cobordisms as morphisms, is the domain of functorial Topological Quantum Field Theories (TQFTs), which constitute an active area of research in topology and mathematical physics (see e.g. \cite{freedhopkins}).\par
The central objects of study in this work are nested manifolds, which are manifolds together with embedded submanifolds, which can themselves have embedded submanifolds, etc. Examples of nested manifolds include knots, configurations of points in manifolds and fixed points of manifolds under some group actions. Nested manifolds have a natural notion of cobordism, illustrated in \cref{fig1}.\par
The topic of nested manifolds is not new in the field. Wall \cite{Wallbook,Wallpairs} and Stong \cite{strong1971cobordism} studied cobordisms of nested manifolds under the name of cobordisms of pairs. Hoekzema \cite{Renee} and Ayala \cite[Section 11.5]{ayala} determined the homotopy type of spaces of nested manifolds and related those to the classifying spaces of nested cobordism categories. Calle, Hoekzema, Murray, Pacheco-Tallaj, Rovi and Sridhar-Shapiro examined the generators and relations of the ``striped cylinder'' cobordism category $\mathrm{Cyl}$ with an eye towards the study of TQFTs with domain nested cobordism categories \cite{cyl}. Nested manifolds have also been investigated in the context of cut-and-paste groups by Komiya \cite{komiya} and Vlierhuis \cite{Rolf}.\par
Like in the classical setting, there are two ways of looking at nested manifolds up to cobordism:
\begin{itemize}
    \item \underline{Unstably}: we consider nested manifolds $K'\subseteq K$ of dimensions $k_2< k_1$ inside a closed $m$-manifold $M$ (maybe together with lifts of their normal bundles to, respectively,  $\theta'\colon B'\to BO(k_1-k_2)$ and $\theta\colon B\to BO(m-k_1)$) up to nested cobordism inside $M\times[0,1]$. This gives rise to the set $\mathrm{NCob}^{(\theta',\theta)}(M)$; see \cref{def:nestedmfld,def:nestedcob} for more precise explanations.
    \item \underline{Stably}: we consider nested manifolds inside a high-dimensional sphere $S^m$ up to cobordism inside $S^m\times[0,1]$ for $m>>0$. By Whitney's theorem, this is equivalent to considering abstract nested manifolds $K'\subseteq K$ of dimensions $k_2< k_1$ (maybe together with a lift of the normal bundle of $K'$ to $\theta'\colon B'\to BO(k_1-k_2)$ and a lift of the stable normal bundle of $K$ to $\theta(n)\colon B(n)\to BO(n)$ for some $n$) up to cobordism. This gives rise to the group $\Omega_{k_1}^{(\theta',\Theta)}$, where $\Theta$ encodes all the possible $\theta(n)$'s; see \cref{snm} for a more accurate description. 
\end{itemize}
Often times, looking at stable objects is easier, while unstable objects are more interesting. As explained below, nested cobordism is one more illustration of this heuristic.\par 
It is clear that a necessary condition for a nested manifold $K'\subseteq K$ to be nullbordant is that both $K$ and $K'$ are nullbordant separately. In the stable range, this is also sufficient. Indeed, Wall showed that the stable nested cobordism groups split as a direct sum of classical stable cobordism groups.
\begin{proposition*}[{ \ref{wall} \cite[Lemma 8.3.5]{Wallbook}}] The stable nested cobordism group of $(\theta',\Theta)$-manifolds splits as a direct sum of stable cobordism groups:
    \[\Omega_{k_1}^{(\theta',\Theta)}\cong\Omega_{k_2}^{\theta'\times\Theta}\oplus\Omega_{k_1}^{\Theta}.\]\end{proposition*}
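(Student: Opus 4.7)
The plan is to realise the splitting as the combination of two natural projections with sections, together with an \emph{absorption lemma} that pins down the kernel of the combined map. Define
\[
p\colon \Omega_{k_1}^{(\theta',\Theta)}\to \Omega_{k_1}^\Theta,\qquad [(K'\subseteq K)]\mapsto [K],
\]
forgetting the submanifold, and
\[
q\colon \Omega_{k_1}^{(\theta',\Theta)}\to \Omega_{k_2}^{\theta'\times\Theta},\qquad [(K'\subseteq K)]\mapsto [K'],
\]
where $K'$ inherits its $\theta'\times\Theta$-structure from the splitting of $\nu_{K'}$ as $\nu_{K'/K}\oplus\nu_K|_{K'}$, the two summands carrying the $\theta'$- and $\Theta$-structures respectively. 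Both maps descend to cobordism classes by a routine check, and $p$ has the obvious section $s\colon [K]\mapsto [(\emptyset\subseteq K)]$ with $q\circ s=0$.

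The first nontrivial step is to construct a section $t$ of $q$. Given a $(\theta'\times\Theta)$-manifold $N$ with stable normal splitting $\xi\oplus\eta$ and $\mathrm{rk}\,\xi=k_1-k_2$, I would take $E=S(\xi\oplus\mathbb{R})\to N$, the associated $S^{k_1-k_2}$-bundle (equivalently, the double of the disk bundle of $\xi$), and embed $N\hookrightarrow E$ as one of its canonical zero-sections. A short tangent-bundle computation yields $\nu_{N/E}\cong\xi$ with the $\theta'$-structure, and the $\Theta$-structure on $E$ is inherited stably from $N$ via the bundle projection (after absorbing a tautological line-bundle factor in the stable range). Setting $t([N])=[(N\subseteq E)]$ then gives $q\circ t=\mathrm{id}$.

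The second nontrivial step is the absorption lemma: if both $p([(K'\subseteq K)])=0$ and $q([(K'\subseteq K)])=0$, with witnesses a $\Theta$-nullbordism $W$ of $K$ and a $(\theta'\times\Theta)$-nullbordism $V$ of $K'$, then $(K'\subseteq K)$ is nested-nullbordant. It suffices to embed $V$ properly in $W$ extending $K'\hookrightarrow K$, so that the normal bundle of the embedding realises the $\theta'$-factor of $V$'s stable structure. The codimension is $k_1-k_2$, exactly the rank of the prescribed normal bundle, so the embedding exists by general position plus obstruction theory for the normal framing, after possibly enlarging $W$ within its $\Theta$-cobordism class. The pair $(V\subseteq W)$ is then the desired nested nullbordism. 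Combined with the observation that $(s,t)$ hits every pair of classes up to a shear by $p\circ t$, this establishes that $(p,q)\colon\Omega_{k_1}^{(\theta',\Theta)}\to\Omega_{k_1}^\Theta\oplus\Omega_{k_2}^{\theta'\times\Theta}$ is an isomorphism.

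The main obstacle is precisely matching the $\theta'$-lift of the normal bundle in the absorption lemma to the structure inherited from $V$: the existence of an embedding is standard, but aligning the normal bundle is an obstruction-theoretic step that requires the flexibility provided by stabilisation. A potentially cleaner alternative would be to identify $\Omega_{k_1}^{(\theta',\Theta)}$ with $\pi_{k_1}$ of the nested Thom spectrum coming from the Pontryagin-Thom construction developed earlier in the paper, and to exhibit there a wedge decomposition into $M\Theta$ and a suspended copy of $M(\theta'\times\Theta)$, from which the group splitting follows immediately.
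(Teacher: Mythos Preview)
Your approach is geometric and genuinely different from the paper's. The paper proves the proposition by invoking the nested Pontryagin--Thom isomorphism $\Omega_{k_1}^{(\theta',\Theta)}\cong\pi_{k_1}(\Th(\theta'^\ast\gamma_{d'})_+\wedge\Th\Theta)$ and then applying the split cofiber sequence of spectra
\[
\Th\Theta\to\Th(\theta'^\ast\gamma_{d'})_+\wedge\Th\Theta\to\Th(\theta'^\ast\gamma_{d'})\wedge\Th\Theta
\]
together with Atiyah's identification of the right-hand spectrum with a shift of $\Th(\theta'\times\Theta)$. In other words, the paper's proof is exactly the ``potentially cleaner alternative'' you sketch in your last paragraph. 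Your primary argument, with the maps $p,q$ and sections $s,t$, is instead in the spirit of Wall's original geometric proof that the paper alludes to.

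However, the proof of your absorption lemma has a real gap. You assert that, given a $\Theta$-nullbordism $W^{k_1+1}$ of $K$ and a $(\theta'\times\Theta)$-nullbordism $V^{k_2+1}$ of $K'$, one can embed $V$ properly in $W$ extending $K'\hookrightarrow K$ with normal bundle realising the $\theta'$-factor. This is not standard: the codimension is $k_1-k_2$, which can be as small as $1$, and general position only produces embeddings when roughly $\dim W>2\dim V$. Stabilisation enlarges the ambient sphere, not $W$ itself, so ``enlarging $W$ within its $\Theta$-cobordism class'' does not help---you cannot raise $\dim W$. The normal-bundle obstruction you flag is secondary; the existence of the embedding itself is the problem.

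The fix is to abandon the pre-chosen $W$ and instead \emph{build} the nested nullbordism out of $V$. Writing $\nu_V=\xi_V\oplus\eta_V$ with $\xi_V|_{K'}=\nu_{K'/K}$, attach the disc bundle $D(\xi_V)$ to $K\times[0,1]$ along the tubular neighbourhood $D(\xi_V|_{K'})\subset K\times\{1\}$; inside this, $V$ sits as the zero-section extending $K'\times[0,1]$. This gives a nested cobordism from $(K'\subseteq K)$ to $(\emptyset\subseteq L)$ for some $L$, showing $\ker q=\mathrm{im}\,s$ directly. Combined with your section $s$, this yields the splitting without ever needing to embed $V$ into an arbitrary $W$.
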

Our work delves further in the splitting problem by analyzing an analog of the Pontryagin-Thom construction for the unstable cobordism sets $\mathrm{NCob}^{(\theta',\theta)}(M)$ and stable cobordism groups $\Omega_{k_1}^{(\theta',\Theta)}$.
\begin{theorem*}[ \ref{nestedcob}] There is a bijection
\[
    \mathrm{NCob}^{(\theta',\theta)}(M)\cong[M,{\Th(\theta'^\ast\gamma_{k_1-k_2}})_+\wedge \Th(\theta^\ast\gamma_{m-k_1})],\]
which becomes a group isomorphism for $m-k_1>\frac{m+1}{2}$ or for $M=S^m$ and $m-k_1>1$. Moreover, there is a group isomorphism
\[\Omega_{k_1}^{(\theta',\Theta)}\cong\pi_{k_1}({\Th(\theta'^\ast\gamma_{k_1-k_2})}_+\wedge \Th\Theta). \]
\end{theorem*}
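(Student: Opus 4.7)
The plan is to adapt the classical Pontryagin--Thom construction by iterating it at the two nested levels. For the forward direction, pick tubular neighborhoods $U\cong D(\nu_K)$ of $K$ in $M$ and $V\cong D(\nu_{K'/K})$ of $K'$ in $K$, with $\pi\colon U\to K$ denoting the tubular projection. Define $\mathrm{PT}\colon M\to T:=\Th(\theta'^\ast\gamma_{d'})_+\wedge \Th(\theta^\ast\gamma_d)$ in three cases. Points outside $U$ map to the basepoint. A point $x\in U$ with $(k,v)\in D(\nu_K)$ and $k\notin V$ maps to the class of $(\ast,[v])$, where $\ast$ is the basepoint of $\Th(\theta'^\ast\gamma_{d'})$ (not the disjoint basepoint) and $[v]$ is the image of $v$ in $\Th(\theta^\ast\gamma_d)$ under the classifying map. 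A point $x\in \pi^{-1}(V)$ with $k=(k',w)\in D(\nu_{K'/K})$ maps to the class of $([w],[v])$. Continuity at $\partial U$ and $\partial V$ is automatic because $[v]$ or $[w]$ reaches the basepoint on the respective sphere bundles. The role of the $(-)_+$ is precisely to allow the map to retain the information of $v$ when $x\in U$ but $k\notin V$, and to ensure that collapsing the first factor recovers the classical PT map for $K\subseteq M$.

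For the inverse, observe that $T$ contains two nested smooth submanifolds in its basepoint-free locus: the ``outer'' stratum $\Th(\theta'^\ast\gamma_{d'})\times B$ of codimension $d$, and the ``inner'' stratum $B'\times B$ of codimension $d+d'$, satisfying $B'\times B\subseteq \Th(\theta'^\ast\gamma_{d'})\times B$. Given $f\colon M\to T$, first perturb it to be transverse to the outer stratum; set $K:=f^{-1}(\Th(\theta'^\ast\gamma_{d'})\times B)$, a closed $k_1$-submanifold of $M$ whose normal bundle inherits a $\theta$-structure from the universal bundle. Then perturb further so that $f|_K$ is transverse to $B'\times B$ inside $\Th(\theta'^\ast\gamma_{d'})\times B$, producing $K':=(f|_K)^{-1}(B'\times B)$ of codimension $d'$ in $K$ with the induced $\theta'$-structure.

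These two assignments descend to mutually inverse bijections on cobordism classes: applying the forward construction to a nested cobordism inside $M\times[0,1]$ yields a homotopy between the PT maps at the two ends, and applying transversality to a homotopy yields a nested cobordism. For the group structure, when $m-k_1>(m+1)/2$ a general position argument guarantees that disjoint representatives exist, so disjoint union is well-defined on $\mathrm{NCob}^{(\theta',\theta)}(M)$ and matches the natural addition on $[M,T]$. When $M=S^m$ with $m-k_1>1$, the pinch map $S^m\to S^m\vee S^m$ induces a group structure on $[S^m,T]=\pi_m(T)$ that corresponds to putting two nested manifolds in disjoint hemispheres. The stable statement then follows by taking $M=S^{k_1+n}$ and passing to the colimit as $n\to\infty$: the Thom spaces $\Th(\theta(n)^\ast\gamma_n)$ stabilize to the Thom spectrum $\Th\Theta$, and homotopy classes stabilize to stable homotopy groups.

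The main obstacle is the two-step transversality used in the inverse: a perturbation of $f$ that achieves transversality to the inner stratum $B'\times B$ must not destroy transversality to the outer stratum $\Th(\theta'^\ast\gamma_{d'})\times B$. This is handled by a relative transversality argument, perturbing $f|_K$ inside $\Th(\theta'^\ast\gamma_{d'})\times B$ rather than perturbing $f$ globally. Compatibility is built in because $B'\times B\subseteq \Th(\theta'^\ast\gamma_{d'})\times B$, so at a preimage point of the inner stratum the normal bundle in $T$ splits as a direct sum of normal bundles, making transversality to the inner stratum automatically imply transversality to the outer one at that point; openness of transversality then handles a neighborhood.
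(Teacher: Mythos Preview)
Your argument is correct, but the paper takes a cleaner conceptual route that avoids the two-step transversality you worry about. Rather than building the Pontryagin--Thom map by hand with nested tubular neighborhoods, the paper observes that a nested $(\theta',\theta)$-submanifold of $M$ is exactly a $\theta$-submanifold $K$ together with the homotopy class of a map $K\to \Th(\theta'^\ast\gamma_{d'})$ (this is the classical PT construction applied once, to $K'\subseteq K$). Hence $\mathrm{NCob}^{(\theta',\theta)}(M)$ is literally the singular cobordism set $\mathrm{Cob}^{\theta_X}(M)$ for $X=\Th(\theta'^\ast\gamma_{d'})$, and the classical PT construction applied a second time gives $[M,\Th(\theta_X^\ast\gamma_d)]\cong[M,X_+\wedge\Th(\theta^\ast\gamma_d)]$. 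The stable case is the same with $\Th\Theta$ in place of $\Th(\theta^\ast\gamma_d)$.

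The practical difference: the paper's reduction pushes all the analytic work (transversality, tubular neighborhoods, well-definedness on cobordism classes) into the already-established classical result, applied twice in sequence, so there is nothing new to verify. Your direct construction unpacks this composite and must therefore confront the compatibility of the two transversality steps explicitly; you handle it correctly, but it is extra labor. On the other hand, your version has the virtue of making the geometric content of the map completely transparent --- indeed the paper remarks informally after the proof that the preimage of $\Th(\theta'^\ast\gamma_{d'})\times B$ is $K$ and that of $B'\times B$ is $K'$, which is exactly what your construction establishes rigorously.
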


Then, Wall's result is an immediate corollary of the following proposition.
\begin{proposition*}[ \ref{split}] There is a cofiber sequence of spectra admitting a retract
    \begin{center}
    \begin{tikzcd}
                \Th\Theta \arrow{r}{i} & \Th(\theta'^\ast\gamma_{k_1-k_2})_+\wedge \Th\Theta \arrow[bend right, dashed]{l}[swap]{s} \arrow{r}{\mathrm{col}} & \Th(\theta'^\ast\gamma_{k_1-k_2})\wedge \Th\Theta.
    \end{tikzcd}
\end{center}
\end{proposition*}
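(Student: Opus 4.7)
The plan is to exhibit this split cofiber sequence as an instance of the general principle that for any well-pointed space $X$, the sequence $S^0 \to X_+ \to X$ is a split cofibration, and then to smash the whole diagram with the spectrum $\Th\Theta$.

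\emph{Step 1.} Set $X := \Th(\theta'^\ast\gamma_{k_1-k_2})$, which has a canonical basepoint (the image of the sphere bundle, equivalently the point at infinity). Applying the functor $(-)_+$ to the basepoint inclusion $\ast\hookrightarrow X$ yields a pointed map $i'\colon S^0 = \ast_+ \to X_+$, while applying $(-)_+$ to the projection $X\to\ast$ yields a pointed map $s'\colon X_+\to S^0$. By functoriality, $s'\circ i' = \mathrm{id}_{S^0}$. The cofiber of $i'$ identifies the two-point subspace $\{+,x_0\}\subset X_+$ (where $+$ is the added basepoint and $x_0$ the original one) to the basepoint, yielding $X$. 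Hence there is a split cofiber sequence of pointed spaces
\[S^0\xrightarrow{\ i'\ }X_+\xrightarrow{\ \mathrm{col}'\ }X,\]
with retract $s'$ of $i'$.

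\emph{Step 2.} Smash this diagram with the spectrum $\Th\Theta$. Since smashing with a fixed spectrum is a left adjoint, it preserves cofiber sequences; it also preserves retractions, since $(s'\wedge\mathrm{id})\circ(i'\wedge\mathrm{id}) = (s'\circ i')\wedge\mathrm{id} = \mathrm{id}$. Using the unit identification $S^0\wedge\Th\Theta\cong\Th\Theta$, we obtain precisely the cofiber sequence in the statement, with $i := i'\wedge\mathrm{id}$, $\mathrm{col} := \mathrm{col}'\wedge\mathrm{id}$, and retract $s := s'\wedge\mathrm{id}$.

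\emph{Step 3.} All that remains is to verify that these maps agree with whatever natural maps arise from the nested Pontryagin-Thom construction of Theorem~\ref{nestedcob}, so that applying $\pi_{k_1}$ to the splitting recovers Wall's decomposition stated in Proposition~\ref{wall}. This is a matter of unwinding definitions.

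There is no serious obstacle here; the only point requiring minor care is that the Thom space $\Th(\theta'^\ast\gamma_{k_1-k_2})$ must be well-pointed for the cofibration $S^0 \to X_+$ to be honest, which is automatic because Thom spaces of CW bundles have a $0$-cell basepoint.
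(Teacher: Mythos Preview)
Your proof is correct and follows essentially the same route as the paper: both start from the split cofiber sequence of pointed spaces $S^0 \xrightarrow{i'} X_+ \xrightarrow{\mathrm{col}'} X$ with retract $s'$ for $X=\Th(\theta'^\ast\gamma_{k_1-k_2})$, and then smash everything with $\Th\Theta$. Your Steps~1--2 simply spell out in more detail what the paper states in two lines; your Step~3 is not needed for the Proposition as stated (it belongs to the subsequent application to Wall's splitting) and can be omitted.
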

It turns out that unstable nested cobordism sets do not split in general. More specifically, when the highest-dimensional manifold has a normal bundle with one framed direction, we can use this direction to move the lowest-dimensional manifold apart from the highest-dimensional one, hence turning our nested manifold into two disjoint manifolds, i.e. a link; see \cref{fig4} for a picture. Links also have a suitable notion of cobordism (see \cref{fig2} for an illustration), which interacts well with the notion of nested cobordism and our unnesting map. We then can relate nested cobordism sets $\mathrm{NCob}^{(\theta',\theta)}(M)$ and cobordism sets of links $\mathrm{LCob}^{(\theta'\times\theta,\theta)}(M)$.
\begin{theorem*}[ \ref{nestsandknots}] For $\theta$ factoring over $BO(m-k_1-1)$, the unnesting map
\[\Upsilon\colon\mathrm{NCob}^{(\theta',\theta)}(M)\to\mathrm{LCob}^{(\theta'\times\theta,\theta)}(M)\]
is bijective.\end{theorem*}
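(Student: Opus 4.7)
The plan is to reduce the statement to a comparison of Pontryagin--Thom target spaces. Theorem~\ref{nestedcob} already identifies the left-hand side as
\[\mathrm{NCob}^{(\theta',\theta)}(M) \cong [M,\, \Th(\theta'^\ast\gamma_{d'})_+ \wedge \Th(\theta^\ast\gamma_d)].\]
For the right-hand side, I would first establish an analogous PT-type description for links. Since $L'$ and $L$ have disjoint tubular neighborhoods in $M$, collapsing the complement of their union gives a natural map $M \to \Th(\nu_{L'}) \vee \Th(\nu_L)$, and the $(\theta'\times\theta,\theta)$-structure identifies these Thom spaces. The standard PT argument then yields
\[\mathrm{LCob}^{(\theta'\times\theta,\theta)}(M) \cong [M,\, \Th(\theta'^\ast\gamma_{d'}\oplus\theta^\ast\gamma_d) \vee \Th(\theta^\ast\gamma_d)],\]
with $d'=k_1-k_2$ and $d=m-k_1$. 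This is a direct adaptation of the proof of Theorem~\ref{nestedcob}, where disjointness replaces the nested inclusion and produces a wedge rather than a smash-with-a-point.

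The two target spaces are equivalent via the standard identities $X_+\wedge Y \simeq (X\wedge Y)\vee Y$ together with $\Th(\xi\oplus\eta)\simeq \Th(\xi)\wedge \Th(\eta)$, giving
\[\Th(\theta'^\ast\gamma_{d'})_+ \wedge \Th(\theta^\ast\gamma_d) \;\simeq\; \Th(\theta'^\ast\gamma_{d'}\oplus\theta^\ast\gamma_d) \vee \Th(\theta^\ast\gamma_d).\]
So both $\mathrm{NCob}^{(\theta',\theta)}(M)$ and $\mathrm{LCob}^{(\theta'\times\theta,\theta)}(M)$ are in bijection with one and the same set of homotopy classes of maps out of $M$; the theorem reduces to checking that $\Upsilon$ realizes this identification.

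Here the hypothesis on $\theta$ enters essentially: the factorization through $BO(m-k_1-1)$ produces a splitting $\theta^\ast\gamma_d \cong \tilde\theta^\ast\gamma_{d-1}\oplus\epsilon^1$, and the nonvanishing section of $\epsilon^1$ is precisely the framed normal direction along which $\Upsilon$ pushes $K'$ off $K$. Geometrically, the PT collapse map for a nested manifold $(K',K)$ factors through a tubular neighborhood $\nu(K)$ of $K$ in $M$; inside $\nu(K)$, the collapse around the subtubular neighborhood of $K'\subset K$ should be shown to agree, after displacing $K'$ along the framed direction by a small amount, with the wedge-sum collapse of two now-disjoint tubular neighborhoods $\nu(K'_{\mathrm{pushed}}) \sqcup \nu(K)$. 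This is the geometric counterpart of the algebraic splitting above.

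The main obstacle will be this compatibility check. I expect it to require a careful analysis of how the nested tubular neighborhood of $(K',K)$ deforms, under the small push along the framed line, into the disjoint union of neighborhoods that defines the PT collapse for the link; once this deformation is produced as an explicit homotopy of maps $M\to \Th(\theta'^\ast\gamma_{d'}\oplus\theta^\ast\gamma_d)\vee \Th(\theta^\ast\gamma_d)$ respecting the normal $(\theta',\theta)$-structures at every stage, bijectivity of $\Upsilon$ on cobordism classes follows from bijectivity of the two PT maps together with the identification of the targets.
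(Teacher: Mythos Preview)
Your overall architecture matches the paper's: identify both sides via Pontryagin--Thom (\cref{nestedcob} and \cref{linkPT}), produce a homotopy equivalence between the two target spaces, and verify that $\Upsilon$ corresponds to this equivalence under the PT bijections. The compatibility check you outline is also essentially what the paper does, though the paper phrases it as tracking the projection $\Th(\theta'^\ast\gamma_{d'})_+\wedge\Th(\theta^\ast\gamma_d)\to\Th(\theta^\ast\gamma_d)$ through the equivalence rather than as a tubular--neighbourhood deformation.

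There is, however, a genuine gap in your target identification. The ``standard identity'' $X_+\wedge Y\simeq(X\wedge Y)\vee Y$ you invoke is \emph{false} for general pointed $Y$. For instance, take $X=S^1$ and $Y=\mathbb{CP}^2$: in $\tilde H^\ast(S^1_+\wedge\mathbb{CP}^2)$, computed from the quotient $(S^1\times\mathbb{CP}^2)/(S^1\times\mathrm{pt})$, the degree--$2$ class $u$ and the degree--$3$ class $tu$ multiply to $tu^2\neq 0$, whereas in $\tilde H^\ast(\Sigma\mathbb{CP}^2\vee\mathbb{CP}^2)$ the corresponding classes lie in different wedge summands and have zero product. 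What \emph{is} standard is the identity $\Sigma(X_+)\simeq\Sigma X\vee S^1$, and this suffices only because $Y=\Th(\theta^\ast\gamma_d)$ is itself a suspension: the factorisation of $\theta$ through $BO(d-1)$ gives $\Th(\theta^\ast\gamma_d)\cong\Sigma\Th(\widetilde\theta^\ast\gamma_{d-1})$, and one then moves the $\Sigma$ across the smash onto $X_+$ before applying the correct splitting. This is exactly the content of the paper's \cref{propsusp} and the chain of equivalences leading to \eqref{eqq}.

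So the hypothesis on $\theta$ is doing double duty, and you have located only half of it. It is needed to define $\Upsilon$ geometrically, as you say, but it is equally essential to make the two PT targets homotopy equivalent in the first place; without it the targets are genuinely different spaces and no bijection of the asserted form exists (cf.\ the discussion in \cref{subsec3.3}). Once you insert the suspension argument in place of the false general identity, your outline becomes the paper's proof.
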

This provides a set of non-trivial nested cobordism invariants, full in the framed case, coming from invariants of cobordism classes of links $\Delta_\lambda$ due to Wang \cite{geohilton,geohiltonthesis} (see \cref{invariant,wang}).
\begin{corollary*}[{ \ref{wang2}}] Let $K'\subseteq K$ be a nested $(\theta',\theta)$-submanifold of $S^m$, with $\theta$ factoring over $BO(m-k_1-1)$. We have that 
\[[K'\subseteq K]=0\text{ in } \mathrm{NCob}^{(\theta',\theta)}(S^m)\xRightarrow{\quad} [K],[K'],\Delta_{[\iota,\iota']}(\Upsilon[K'\subseteq K])=0.\]
\end{corollary*}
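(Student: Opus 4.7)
The plan is that all three vanishings follow directly from previously established results once one unwinds what $[K' \subseteq K] = 0$ means in $\mathrm{NCob}^{(\theta',\theta)}(S^m)$.

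First I would dispose of $[K]=0$ and $[K']=0$. Unfolding \cref{def:nestedcob}, a nested nullbordism of $(K',K)$ is a compact nested manifold $(W',W)\subseteq S^m\times[0,1]$ with $\partial W = K$ and $\partial W' = K'$, equipped with $\theta$ and $\theta'$ normal structures that restrict on the boundary to those of $(K',K)$. Forgetting the submanifold $W'$ shows that $W$ itself is a classical $\theta$-nullbordism of $K$, so $[K]=0$. Likewise $W'$ is a classical $\theta'\times\theta$-nullbordism of $K'$: the normal bundle of $K'$ in $S^m\times[0,1]$ splits as the normal bundle of $K'$ inside $W$ (carrying the $\theta'$ structure) direct sum the restriction of the $\theta$ normal bundle of $W$ to $W'$, and this splitting agrees on the boundary with the analogous splitting for $(K',K)$. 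Hence $[K']=0$.

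Second, I would invoke \cref{nestsandknots}: the hypothesis that $\theta$ factors over $BO(m-k_1-1)$ is exactly the hypothesis under which the unnesting map
\[\Upsilon\colon \mathrm{NCob}^{(\theta',\theta)}(S^m)\xrightarrow{\cong}\mathrm{LCob}^{(\theta'\times\theta,\theta)}(S^m)\]
is a bijection. By construction $\Upsilon$ sends the class of the empty nested manifold to the class of the empty link, so it preserves basepoints; hence $[K'\subseteq K]=0$ forces $\Upsilon[K'\subseteq K]=0$. Finally, by Wang's result \cref{wang}, the invariant $\Delta_{[\iota,\iota']}$ is a cobordism invariant of links, so it vanishes on the zero class.

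The only point that requires more than routine checking is that the Wang invariants $\Delta_{[\iota,\iota']}$ as formulated in \cref{invariant,wang} are defined and are cobordism invariants precisely on the set $\mathrm{LCob}^{(\theta'\times\theta,\theta)}(S^m)$ in which $\Upsilon[K'\subseteq K]$ naturally lives, i.e.\ in the presence of the specific normal structures obtained from $\Upsilon$. Granting this, which is exactly the content of \cref{invariant,wang}, the corollary is immediate and requires no further computation; the proof is essentially a composition of the bijection of \cref{nestsandknots} with the invariance of $\Delta_{[\iota,\iota']}$, together with the obvious forgetful maps for the first two conclusions.
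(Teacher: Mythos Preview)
Your proof is correct and follows essentially the same route as the paper, which simply records the corollary as an immediate consequence of \cref{nestsandknots} together with \cref{invariant}: since $\Upsilon$ is a bijection, $[K'\subseteq K]=0$ forces $\Upsilon[K'\subseteq K]=0$, and then \cref{invariant} applied to that nullbordant link yields all three vanishings simultaneously. The only minor difference is that you establish $[K]=0$ and $[K']=0$ by a direct forgetful argument from the nested nullbordism, rather than deducing them from \cref{invariant}; this has the small bonus that those two conclusions do not actually require the factoring hypothesis on $\theta$, though of course the third conclusion still does. One small quibble: the reference you invoke for the third vanishing should be \cref{invariant} rather than \cref{wang}, since the latter is the framed statement, but the substance of your argument is unaffected.
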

\begin{corollary*}[{ \ref{wang3}}]
Let $K'\subseteq K$ be a framed nested submanifold of $S^m$ where $K$ has codimension larger than $1$. We have that 
\[[K'\subseteq K]=0\text{ in } \mathrm{NCob}^{(\ast,\ast)}(S^m)\xLeftrightarrow{\quad}\Delta_\lambda(\Upsilon[K'\subseteq K])=0\ \forall\lambda\in\Lambda,\]
for $\Lambda$ a system of basic Whitehead products.
\end{corollary*}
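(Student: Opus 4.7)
The plan is to derive this biconditional by combining the unnesting bijection $\Upsilon$ of \cref{nestsandknots} with Wang's Hilton--Milnor-type completeness result \cref{wang} for framed link cobordism. The forward direction is already \cref{wang2} specialized to the framed case, so the content is in the converse.

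First, I would check that the hypotheses of \cref{nestsandknots} are met: since $\theta = \ast$, it trivially factors through $BO(m-k_1-1)$ (using the hypothesis $m - k_1 > 1$), and hence the unnesting map
\[\Upsilon\colon \mathrm{NCob}^{(\ast,\ast)}(S^m) \longrightarrow \mathrm{LCob}^{(\ast\times\ast,\ast)}(S^m)\]
is a bijection. Thus $[K'\subseteq K] = 0$ in the source is equivalent to $\Upsilon[K'\subseteq K] = 0$ in the target, and it suffices to show that the latter vanishes precisely when every $\Delta_\lambda$ evaluated on it is zero.

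Next, I would apply the Pontryagin--Thom identification together with the Hilton--Milnor splitting as packaged in Wang's result \cref{wang}. In the framed case the relevant Thom spaces become spheres, so the link cobordism group is identified with $\pi_m(S^{m-k_1}\vee S^{m-k_2})$, which by Hilton--Milnor splits as a direct sum indexed by the system $\Lambda$ of basic Whitehead products on the two fundamental classes $\iota,\iota'$, with the $\Delta_\lambda$ being exactly the associated projections. In particular, the vanishing of every $\Delta_\lambda$ for $\lambda \in \Lambda$ is equivalent to the vanishing of the link cobordism class.

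Chaining the two equivalences yields the corollary. The main point of care is ensuring that ``$=0$'' has the same meaning across the identifications, that is, that $\Upsilon$ and the Pontryagin--Thom map are group homomorphisms in our range; this is guaranteed by \cref{nestedcob} (for $M = S^m$ and $m - k_1 > 1$) and by \cref{nestsandknots}. Beyond this bookkeeping, the statement follows essentially formally from the results already assembled in the paper, so there is no substantive obstacle to overcome.
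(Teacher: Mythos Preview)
Your proposal is correct and mirrors the paper's intended argument: the corollary is simply \cref{nestsandknots} composed with Wang's \cref{wang}, and the paper states it without further proof for exactly this reason. One tiny imprecision: the forward direction is not literally \cref{wang2} (which only controls $[K]$, $[K']$, and $\Delta_{[\iota,\iota']}$, not all $\Delta_\lambda$), but this is immaterial since your main argument via the bijection $\Upsilon$ and \cref{wang} already yields both directions at once.
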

When we do not have a framed normal direction on the highest-dimensional manifold, the unnesting map cannot be defined and hence the nested cobordism sets become more misterious. However, we know that unstable nested cobordism sets do not necessarily split in this case as well. \cref{favex3} provides an instance of that, which encourages further investigation.
\begin{overview*} This document is structured as follows. In \cref{subsec2.1}, we review the classical Pontryagin-Thom construction. In \cref{subsec2.2}, we introduce the nested cobordism sets and groups and describe the nested Pontryagin-Thom construction. In \cref{sec5}, we show \cref{split} and give a new proof of the splitting of the stable nested cobordism groups. In \cref{subsec3.1}, we define the cobordism sets of links inside a closed manifold, describe a Pontryagin-Thom construction for these and review cobordism invariants for them due to Wang in the case our links have a framed normal direction.
In \cref{subsec3.2}, we prove \cref{nestsandknots} and examine its consequences. Lastly, in \cref{subsec3.3}, we give an example of the nonsplitting of the unstable nested cobordism sets that is not comparable with the case of links, warranting in this way further study.
\end{overview*}

\begin{acknowledgements*} The author is very grateful to her Ph.D advisors, Renee S. Hoekzema and Thomas O. Rot, for their infinite patience and support. She would also like to thank Michael Jung and Lauran Toussaint for very helpful discussions.
\end{acknowledgements*}

\section{Nested Pontryagin-Thom construction}\label{sec2}
\subsection{The classical Pontryagin-Thom construction}\label{subsec2.1}
Let $M$ be a closed $m$-manifold and $k$ an integer with $0\leq k<m$.
\begin{definition}\label[definition]{def:structure} For an integer $d\geq0$, a $d$-dimensional {\bf structure} will be a Serre fibration $B\to BO(d)$.\par
Recall that, up to homotopy equivalence, we can think of any continuous map as a Serre fibration.
\end{definition}
Structures are often used to encode geometric information about manifolds, such as framings. See \cref{pt,singcob} for further details.\par
Fix an $(m-k)$-dimensional structure $\theta\colon B\to BO(m-k)$.
\begin{definition} A {\bf $\theta$-submanifold} of $M$ is a closed $k$-submanifold $K$ of $M$ together with the homotopy class of a lift of the classifying map of the normal bundle $\nu_K^M$ of $K$ inside $M$ to $B$:
\begin{center}
\begin{tikzcd}
    & B\arrow{d}{\theta}\\
    K \arrow[dashed]{ru} \arrow{r}[swap]{\nu_K^M} & BO(m-k).
\end{tikzcd}
\end{center}
\end{definition}
\begin{definition}\label[definition]{def:cob} A {\bf cobordism} between two $\theta$-submanifolds $K$ and $\widetilde{K}$ of $M$ is a compact submanifold $W$ of $M\times[0,1]$, intersecting $M\times\{0,1\}$ transversely, together with a lift of the classifying map of the normal bundle $\nu_W^{M\times[0,1]}$ to $B$, restricting to $K\sqcup\widetilde{K}$ on the boundary:
\begin{center}
    \begin{tikzcd}
        \partial W = W\cap M\times\{0,1\} = K\sqcup\widetilde{K} \arrow[dashed]{r} \arrow[hook]{d} & B \arrow{d}{\theta} \\
        W \arrow{r}[swap]{\nu_{W}^{M\times[0,1]}} \arrow[dashed]{ru} & BO(m-k).
    \end{tikzcd}
\end{center}
This defines an equivalence relation on the set of $\theta$-submanifolds of $M$. Let us denote the {\bf set of cobordism classes of $\theta$-submanifolds of $M$} by $\mathrm{Cob}^\theta(M)$. Note that the codimension of the submanifolds is implicit in the map $\theta$. When the codimension $m-k$ is big enough ($m-k>\frac{m+1}{2}$), $\mathrm{Cob}^\theta(M)$ becomes an abelian group under disjoint union: we can always find disjoint representatives of the classes and the choice of such representatives does not matter up to cobordism. The same happens for smaller codimension ($m-k>1$) in the special case the background manifold $M=S^m$ is a sphere because we can choose our representatives lying in different hemispheres. See \cite[Section IX.3]{kosinski} for a discussion about the framed case; the arguments follow through in general. 
\end{definition}
The pullback of the tautological bundle $\gamma_{m-k}$ over $BO(m-k)$ along $\theta$ is a vector bundle $\theta^\ast\gamma_{m-k}$ of rank $m-k$ over $B$. Its Thom space will be of interest, since the homotopy classes of maps from $M$ to this space are in bijection with the set of cobordism classes of $\theta$-submanifolds of $M$. This bijection becomes a group isomorphism for $m-k>\frac{m+1}{2}$ or for $M=S^m$ and $m-k>1$. Observe that the set of homotopy classes from $M$ to $\Th(\theta^\ast\gamma_{m-k})$ also becomes a group under these assumptions. Indeed, a cell decomposition on $B$ (which we can always have up to homotopy) gives a cell decomposition of the Thom space $\Th(\theta^\ast\gamma_{m-k})$ with one $0$-cell being the infinity point and then one $(l+m-k)$-cell for each $l$-cell of $B$ (see \cite[Lemma 18.1]{milnorstasheff}). In particular, if $M=S^m$ and $m-k>1$, $\Th(\theta^\ast\gamma_{m-k})$ is simply connected and then \[[S^m, \Th(\theta^\ast\gamma_{m-k})]\cong\pi_m(\Th(\theta^\ast\gamma_{m-k})).\] On the other hand, if $m-k>\frac{m+1}{2}$, the inclusion \[\Th(\theta^\ast\gamma_{m-k})\vee\Th(\theta^\ast\gamma_{m-k})\to\Th(\theta^\ast\gamma_{m-k})\times\Th(\theta^\ast\gamma_{m-k})\] is $(2(m-k)-1)$-connected. Since the dimension of $M$ is $m<2(m-k)-1$, we can lift maps $M\to\Th(\theta^\ast\gamma_{m-k})\times \Th(\theta^\ast\gamma_{m-k})$ to the wedge uniquely up to homotopy. Then, given two maps $f,g\colon M\to \Th(\theta^\ast\gamma_{m-k})$, we can lift $(f,g)\colon M\to \Th(\theta^\ast\gamma_{m-k})\times\Th(\theta^\ast\gamma_{m-k})$ to the wedge, and then compose this lift $M\to \Th(\theta^\ast\gamma_{m-k})\vee \Th(\theta^\ast\gamma_{m-k})$ with the fold map $\Th(\theta^\ast\gamma_{m-k})\vee \Th(\theta^\ast\gamma_{m-k})\to \Th(\theta^\ast\gamma_{m-k})$. This defines a binary operation on $[M,\Th(\theta^\ast\gamma_{m-k})]$ with identity the class of the constant map. Moreover, this is a group; see \cite[page 421]{spanier} for further details.
\begin{theorem}[{ (Pontryagin-Thom isomorphism)}]\label[theorem]{PTconst}
There is a bijection
\[\mathrm{Cob}^\theta(M)\cong [M, \Th(\theta^\ast\gamma_{m-k})],\]
which becomes a group isomorphism for $m-k>\frac{m+1}{2}$ or for $M=S^m$ and $m-k>1$.\end{theorem}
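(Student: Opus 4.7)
The plan is to build explicit mutually inverse maps between the two sides via the Pontryagin-Thom collapse in one direction and the transverse preimage in the other. For the forward map, given a $\theta$-submanifold $K \subseteq M$ with structure lift $\ell \colon K \to B$, first choose a tubular neighborhood $\nu(K) \subseteq M$ and identify it with the total space of the normal bundle $\nu_K^M$. The $\theta$-structure provides a bundle map $\nu_K^M \to \theta^\ast \gamma_{m-k}$ covering $\ell$; composing with the quotient to the Thom space and collapsing $M \setminus \nu(K)$ to the basepoint yields the Pontryagin-Thom collapse $c_K \colon M \to \Th(\theta^\ast \gamma_{m-k})$. A standard isotopy argument shows independence from the choices of tubular neighborhood and bundle map, and given a $\theta$-cobordism $W \subseteq M \times [0,1]$ its own collapse furnishes a homotopy between $c_K$ and $c_{\widetilde{K}}$.

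For the inverse map, given a homotopy class $[f]$, pick a smooth representative transverse to the zero section $B \hookrightarrow \Th(\theta^\ast \gamma_{m-k})$. The preimage $K := f^{-1}(B)$ is a closed $k$-submanifold of $M$, and the differential of $f$ identifies $\nu_K^M$ with $(f|_K)^\ast(\theta^\ast \gamma_{m-k})$, so $f|_K \colon K \to B$ serves as the required structure lift. A smooth homotopy between two transverse representatives, itself made transverse to $B$ rel boundary, produces a $\theta$-cobordism between the corresponding $\theta$-submanifolds, making the assignment well-defined on homotopy classes.

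That the two assignments are mutually inverse is essentially formal: the collapse map of $K$ is itself transverse to $B$ with preimage exactly $K$, and conversely any transverse $f$ is homotopic to the collapse of $f^{-1}(B)$ by linearly scaling the fiber coordinates on a tubular neighborhood of $f^{-1}(B)$. For the group-theoretic refinement under the stated codimension hypothesis, if $K$ and $K'$ are disjoint $\theta$-submanifolds with disjoint tubular neighborhoods, then $c_{K \sqcup K'}$ sends the complement of these neighborhoods to the basepoint and therefore factors canonically through $\Th(\theta^\ast \gamma_{m-k}) \vee \Th(\theta^\ast \gamma_{m-k})$ followed by the fold map; this is exactly the operation defined in the text applied to $(c_K, c_{K'})$, so disjoint union corresponds to the group operation on homotopy classes.

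The main technical obstacle is the transversality analysis near the basepoint of the Thom space, which is not a smooth point of $\Th(\theta^\ast \gamma_{m-k})$. One must verify that every continuous map $M \to \Th(\theta^\ast \gamma_{m-k})$ admits a representative that avoids a neighborhood of the basepoint and is smooth and transverse to the zero section on the smooth locus (the total space of $\theta^\ast \gamma_{m-k}$), and that such representatives can be chosen compatibly for homotopies and cobordisms. This is standard but requires some care with smooth approximations and with the interplay between the chosen tubular neighborhoods of $K$ in $M$ and of the zero section in the Thom space.
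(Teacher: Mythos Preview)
Your overall strategy matches the paper's sketch: Pontryagin--Thom collapse in one direction, transverse preimage in the other. The forward map and the verification that the two constructions are mutual inverses are fine, and your remark on the group structure is correct.

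There is, however, a genuine gap in your inverse map. You write ``pick a smooth representative transverse to the zero section $B \hookrightarrow \Th(\theta^\ast\gamma_{m-k})$,'' but $B$ is an arbitrary space admitting a Serre fibration to $BO(m-k)$; it need not be a smooth (or even finite-dimensional) manifold, so neither is $\Th(\theta^\ast\gamma_{m-k})$, and the phrases ``smooth representative'' and ``transverse to $B$'' have no a priori meaning. You flag a difficulty in your last paragraph but locate it only at the basepoint; the problem is global.

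The paper's sketch handles this by passing to a finite-dimensional smooth model: one composes with $\Th(\theta^\ast\gamma_{m-k})\to\Th(\gamma_{m-k})$, uses compactness of $M$ to factor through $\Th(\gamma_{m-k,n})$ for some finite $n$, and then applies genuine transversality to the smooth submanifold $\mathrm{Gr}_{m-k}(\mathbb{R}^n)\subset\Th(\gamma_{m-k,n})$. The preimage of the Grassmannian is the desired $k$-submanifold, and the lift to $B$ is recovered from the original map together with the fibration property of $\theta$. Once you insert this reduction, the rest of your argument goes through.
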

\begin{proof}[Sketch of the proof (for full details, see e.g. {\cite[Section 8]{Wallbook}})]
We define the map \[\mathrm{Cob}^\theta(M)\to [M, \Th(\theta^\ast\gamma_{m-k})]\] in the following way. Take a $\theta$-submanifold $K$ of $M$ representing a cobordism class and define a map $M\to \Th(\theta^\ast\gamma_{m-k})$ by sending a tubular neighbourhood of $K$ to $\Th(\theta^\ast\gamma_{m-k})$ via the lift of the classifying map of the normal bundle of $K$ inside $M$, and the rest of $M$ to the infinity point of $\Th(\theta^\ast\gamma_{m-k})$.\par
The inverse map works as follows. Take a map $M\to\Th(\theta^\ast\gamma_{m-k})$ representing a homotopy class. Let $\gamma_{m-k,n}$ be the tautological bundle over the Grassmannian $\mathrm{Gr}_{m-k}(\mathbb{R}^n)$ of $(m-k)$-planes in $\mathbb{R}^n$. Since $M$ is compact, the composition $M\to \Th(\theta^\ast\gamma_{m-k})\to\Th(\gamma_{m-k})$ factors through $\Th(\gamma_{m-k,n})$, for some $n$. Choose the representative so that $M\to \Th(\gamma_{m-k,n})$ is differentiable and transverse to $\mathrm{Gr}_{m-k}(\mathbb{R}^n)$. Then, the inverse image of the Grassmannian gives us a $k$-submanifold of $M$ whose normal bundle inside $M$ has a lift to $B$.\end{proof}
\begin{example}\label[example]{pt}
Setting $\theta\simeq\ast\colon\ast\to BO(m-k)$, the trivial structure, \cref{PTconst} recovers the classical result by Pontryagin \cite{pontrjagin2007smooth}. In this case, $\mathrm{Cob}^{\ast}(M)=\mathrm{Cob}_k^{fr}(M)$ is the set of cobordism classes of framed $k$-submanifolds of $M$:
\[\mathrm{Cob}_k^{fr}(M)\cong [M, S^{m-k}].\]
\cref{PTconst} gets back the work of Thom \cite[Theorem IV.6]{Thom} for the identity structure $\theta=\mathrm{id}\colon BO(m-k)\to BO(m-k)$. In this case, we are considering the set of cobordism classes of all $k$-submanifolds of $M$, $\mathrm{Cob}^{\mathrm{id}}_k(M)=\mathrm{Cob}_k(M)$:
\begin{align*}
    \mathrm{Cob}_k(M)&\cong [M, \Th(\gamma_{m-k})].
\end{align*}
\end{example}
Now, rather than only looking at submanifolds of a given manifold, we would like to look at abstract manifolds. We can do this by considering submanifolds of increasingly big spheres, but then the rank of our normal bundles is not fixed. We hence need to consider stable structures.
\begin{definition}\label[definition]{def:stablestructure} A {\bf stable structure} $\Theta$ is a commutative diagram of spaces:
\begin{center}
    \begin{tikzcd}
        \ldots \arrow{r} & B(n-1) \arrow{d}{\theta({n-1})} \arrow{r} & B(n) \arrow{d}{\theta(n)} \arrow{r} & B(n+1) \arrow{r} \arrow{d}{\theta({n+1})} & \ldots\\
        \ldots \arrow{r} & BO(n-1) \arrow{r} & BO(n) \arrow{r} & BO(n+1) \arrow{r} & \ldots,
    \end{tikzcd}
\end{center}
where $\theta(n)$ is a Serre fibration for every $n\geq0$.
\end{definition}
We use capital letters to distinguish stable structures from the unstable ones.\par
Fix a stable structure $\Theta$.
\begin{definition}
A {\bf $\Theta$-manifold} of dimension $k$ is a closed $k$-manifold $K$ together with the homotopy class of a lift of the classifying map of the normal bundle of some embedding $K\hookrightarrow S^{n+k}$ to $B(n)$.\par
We will identify an embedding $K\hookrightarrow S^{n+k}$ with all the embeddings $K\hookrightarrow S^{n'+k}$ factoring over the standard inclusion $S^{n+k}\hookrightarrow S^{n'+k}$ for $n'>n$: $K\hookrightarrow S^{n+k}\hookrightarrow S^{n'+k}$.\par
Likewise, we will identify a lift of the classifying map of the normal bundle of $K\hookrightarrow S^{n+k}$ to $B(n)$ with the composition of this lift with $B(n)\to B(n+1)\to\ldots$.\par
Two $\Theta$-manifolds of dimension $k$ will be cobordant if they are cobordant as $\theta(n)$-submanifolds of $S^{n+k}$ for some $n$.\par
The {\bf stable cobordism group} $\Omega_{k}^{\Theta}$ of $\Theta$-manifolds of dimension $k$ is:
\begin{align*}\Omega_{k}^{\Theta}=\mathrm{colim}_{n\to\infty}\mathrm{Cob}^{\theta(n)}(S^{k+n}),\end{align*}
the colimit of the maps $\mathrm{Cob}^{\theta(n)}(S^{k+n})\to\mathrm{Cob}^{\theta(n+1)}(S^{k+n+1})$ induced by the standard sphere inclusions.
\end{definition}
By putting all the Thom spaces $\{\Th(\theta(n)^\ast\gamma_{n})\}_{n\geq1}$ together, one gets the Thom spectrum $\Th\Theta$, and then the stable cobordism groups are isomorphic to the homotopy groups of this spectrum. We refer the reader to \cite[Chapter III.2]{spectra} for detailed definitions of spectra and stable homotopy groups.
\begin{theorem}[{ \cite{lashof}}]\label[theorem]{stablePTcons} There is a group isomorphism:
\[\Omega_k^\Theta\cong \pi_k(\Th\Theta).\]
\end{theorem}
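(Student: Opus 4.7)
The plan is to obtain this isomorphism by passing to the colimit of the unstable Pontryagin-Thom isomorphisms from \cref{PTconst}. For each $n$ large enough (specifically $n>1$, so that the Thom space $\Th(\theta(n)^\ast\gamma_n)$ is simply connected), \cref{PTconst} applied to $M=S^{n+k}$ and the structure $\theta(n)\colon B(n)\to BO(n)$ yields a group isomorphism
\[\mathrm{Cob}^{\theta(n)}(S^{n+k})\cong[S^{n+k},\Th(\theta(n)^\ast\gamma_n)]\cong\pi_{n+k}(\Th(\theta(n)^\ast\gamma_n)).\]
Taking the colimit over $n$ of the left-hand sides recovers, by definition, $\Omega_k^\Theta$, while the colimit of the right-hand sides along the structure maps of the spectrum $\Th\Theta$ is by definition $\pi_k(\Th\Theta)$.

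The key step is to check that this identification is compatible with the transition maps on both sides, so that the colimits actually match. On the cobordism side, the transition map is induced by the standard inclusion $S^{n+k}\hookrightarrow S^{n+k+1}$: a $\theta(n)$-submanifold $K\subseteq S^{n+k}$ is sent to the same $K$ viewed as a submanifold of $S^{n+k+1}$, with normal bundle $\nu_K^{S^{n+k}}\oplus\underline{\mathbb{R}}$ and the induced lift to $B(n+1)$ coming from the commutative square defining the stable structure. On the homotopy side, the transition map is the suspension $\pi_{n+k}(\Th(\theta(n)^\ast\gamma_n))\to\pi_{n+k+1}(\Sigma\Th(\theta(n)^\ast\gamma_n))$ followed by the structure map $\Sigma\Th(\theta(n)^\ast\gamma_n)\to\Th(\theta(n+1)^\ast\gamma_{n+1})$, which on Thom spaces is induced by adding a trivial summand to the vector bundle.

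The main obstacle is verifying precisely this compatibility: I would take a representative $\theta(n)$-submanifold $K\subseteq S^{n+k}$, write down explicitly the Pontryagin-Thom map $f\colon S^{n+k}\to\Th(\theta(n)^\ast\gamma_n)$ that collapses a tubular neighborhood, and then check that the Pontryagin-Thom map associated to the stabilized submanifold $K\subseteq S^{n+k+1}$ agrees up to homotopy with the suspension $\Sigma f$ composed with the spectrum's structure map. This boils down to the fact that a tubular neighborhood of $K$ inside $S^{n+k+1}\cong\Sigma S^{n+k}$ can be arranged to equal the product of a tubular neighborhood of $K$ in $S^{n+k}$ with the suspension coordinate, and that the classifying map for $\nu_K^{S^{n+k}}\oplus\underline{\mathbb{R}}$ factors through the corresponding one for $\nu_K^{S^{n+k}}$ by the structure of $\Theta$.

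Once this naturality square is established, the theorem follows formally by passing to the colimit, since colimits of isomorphisms of abelian groups are isomorphisms. Full details for this computation (in the classical cases) can be found in \cite{lashof} or \cite[Section 8]{Wallbook}; the argument in the present $\Theta$-equivariant setting is identical once the unstable \cref{PTconst} is in place.
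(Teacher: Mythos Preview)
The paper does not give its own proof of this theorem: it is stated with a citation to Lashof and immediately followed by an example, with no proof environment in between. Your sketch is the standard argument (colimit of the unstable Pontryagin--Thom isomorphisms, checking naturality with respect to stabilization), and it is correct; there is simply nothing in the paper to compare it against.
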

\begin{example} Let us stabilize \cref{pt}. Setting $\Theta\simeq\ast$ with $\theta(n)\simeq\ast$, we get that the cobordism groups of stably normally framed manifolds are isomorphic to the stable homotopy groups of spheres: $\Omega_k^{fr}\cong\pi_k \mathbb{S}$. Placing $\Theta=\mathrm{id}$ with $\theta(n)=\mathrm{id}$, we get that the cobordism groups of manifolds are isomorphic to the homotopy groups of the Thom spectrum $MO$: $\Omega_k\cong\pi_k MO$. Thom \cite[Theorem IV.12]{Thom} was able to compute these cobordism groups and the cobordism ring $\Omega_\ast$.\end{example}

\begin{example}\label[example]{singcob}
The following structure will be important for us. Let $X$ be a topological space and consider
$BO(m-k)\times X\to BO(m-k)$ the projection onto the first factor. A submanifold of $M$ with a lift to this map is a closed submanifold of $M$ together with the homotopy class of a map to $X$. Two such submanifolds of $M$ are cobordant if there is a compact submanifold of $M\times [0,1]$, together with a map to $X$, restricting to our initial submanifolds and maps on the boundary. This is also called singular cobordism in $X$. Note that in this instance, the Thom space is $X_+\wedge\Th(\gamma_{m-k})$.\par
More generally, given a structure $\theta\colon B\to BO(m-k)$ and a topological space $X$, one gets a new structure
\[\theta_X\colon B\times X\to BO(m-k)\]
by composing the projection onto the first factor with $\theta$. A $\theta_X$-submanifold of $M$ is a $\theta$-submanifold of $M$ together with the homotopy class of a map from the submanifold to $X$. Similarly as before, 
\begin{equation}\label{eq9}
\Th(\theta_X^\ast\gamma_{m-k})\cong X_+\wedge\Th(\theta^\ast\gamma_{m-k}).
\end{equation}
For $\Theta=\Theta(X)$ with $\theta(n)\colon BO(n)\times X\to BO(n)$, \cref{stablePTcons} recovers part of the fact that singular cobordism in $X$ (usually denoted by $\Omega_n(X)=\Omega_n^{\Theta(X)}$) is a homology theory \cite{atiyahbord,connerfloyd}: $\Omega_n^{\Theta(X)}\cong \pi_n(X_+\wedge MO)$.\end{example}

\subsection{Nested manifolds and the nested Pontryagin-Thom construction}\label{subsec2.2}
We will first define once nested submanifolds of an $m$-dimensional closed manifold $M$. Fix integers $k_1,k_2$ with $0\leq k_2<k_1<m$ and consider structures $\theta\colon B\to BO(d)$ and $\theta'\colon B'\to BO(d')$ as in \cref{def:structure} for $d=m-k_1$ and $d'=k_1-k_2$.
\begin{definition}\label[definition]{def:nestedmfld}
A {\bf (once) nested $(\theta',\theta)$-submanifold} of $M$ is a $\theta$-submanifold $K$ of $M$ together with a $\theta'$-submanifold $K'$ of $K$. We will denote this by $K'\subseteq K\subseteq M$. See \cref{fig1} for an example of a nested submanifold of $S^2$.
\end{definition}

\begin{definition}\label[definition]{def:nestedcob}
A {\bf nested $(\theta',\theta)$-cobordism} between two nested  $(\theta',\theta)$-submanifolds $K'\subseteq K$, $\widetilde{K}'\subseteq\widetilde{K}$ of $M$ is a cobordism $W$ between $K$ and $\widetilde{K}$ as in \cref{def:cob}, together with a compact submanifold $W'\subseteq W$ intersecting $\partial W$ transversely and a lift of the classifying map of the normal bundle $\nu_{W'}^W$ to $\theta'$, restricting to $K'\sqcup\widetilde{K}'$ on the boundary:
\begin{center}
    \begin{tikzcd}
        \partial W' = W'\cap M\times\{0,1\} = K'\sqcup\widetilde{K}' \arrow[dashed]{r} \arrow[hook]{d} & B' \arrow{d}{\theta'} \\
        W' \arrow{r}[swap]{\nu_{W'}^{W}} \arrow[dashed]{ru} & BO(d').
    \end{tikzcd}
\end{center}
See \cref{fig1} for an example of a nested cobordism inside $S^2\times[0,1]$.
\end{definition}
\begin{figure}
    \centering
    \includegraphics[scale=0.55]{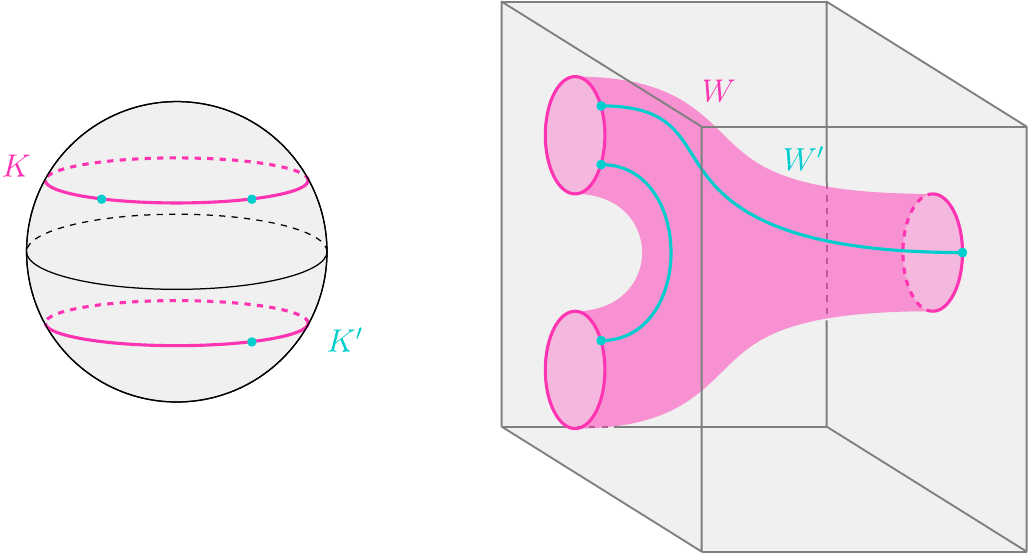}
    \caption{On the left, a nested submanifold $K'\subseteq K$ of $S^2$; on the right, a nested cobordism $W'\subseteq W$ inside $S^2\times[0,1]$.}
    \label{fig1}
\end{figure}
Nested $(\theta',\theta)$-cobordism is an equivalence relation. Let us denote the {\bf set of $(\theta',\theta)$-cobordism classes of nested  $(\theta',\theta)$-submanifolds of $M$} by $\mathrm{NCob}^{(\theta',\theta)}(M)$. Notice that the codimensions of the submanifolds are implicit in the structures $\theta$, $\theta'$. If we have big enough codimension $d>\frac{m+1}{2}$, $\mathrm{NCob}^{(\theta',\theta)}(M)$ is an abelian group under disjoint union. The same happens if $d>1$ and $M= S^m$.\par
Next, we introduce the definition of nested manifold without a background manifold $M$. Note that in this case, the rank of the normal bundle of the highest-dimensional manifold changes, but the rank of the normal bundle of the smallest-dimensional manifold inside the highest-dimensional manifold is still fixed. Then, we consider one stable structure $\Theta$ as in \cref{def:stablestructure} for the highest-dimensional manifold.
\begin{definition}\label[definition]{snm} A {\bf (once) nested $(\theta',\Theta)$-manifold} of dimensions $k_2<k_1$ is a $\Theta$-manifold $K$ of dimension $k_1$, together with a $\theta'$-submanifold $K'\subseteq K$.\par
The {\bf stable nested cobordism group $\Omega_{k_1}^{(\theta',\Theta)}$ of $(\theta',\Theta)$-manifolds of dimensions $k_2<k_1$} is:
\begin{align*}\Omega_{k_1}^{(\theta',\Theta)}=\mathrm{colim}_{n\to\infty}\mathrm{NCob}^{(\theta',\theta(n))}(S^{k_1+n}),\end{align*}
the colimit of the maps $\mathrm{NCob}^{(\theta',\theta(n))}(S^{k_1+n})\to\mathrm{NCob}^{(\theta',\theta(n+1))}(S^{k_1+n+1})$ induced by the standard sphere inclusions.\end{definition}
Cobordisms between submanifolds of a given manifold $M$ were already studied by Pontryagin and Thom. However, these cobordisms were considered within $M\times [0,1]$, that is, with the highest-dimensional cobordism being cylindrical. We allow the highest-dimensional manifold to change, and we will also consider the option of having further submanifolds inside the submanifolds.\par
The unstable and the stable nested Pontryagin-Thom constructions are given by the following theorem.
\begin{theorem}\label[theorem]{nestedcob} There is a bijection
\[
    \mathrm{NCob}^{(\theta',\theta)}(M)\cong[M,{\Th(\theta'^\ast\gamma_{d'})}_+\wedge \Th(\theta^\ast\gamma_{d})],\]
where $d=m-k_1$ and $d'=k_1-k_2$, which becomes a group isomorphism for $m-k_1>\frac{m+1}{2}$ or for $M=S^m$ and $m-k_1>1$. Moreover, there is a group isomorphism
\[\Omega_{k_1}^{(\theta',\Theta)}\cong\pi_{k_1}({\Th(\theta'^\ast\gamma_{d'})}_+\wedge \Th\Theta). \]
\end{theorem}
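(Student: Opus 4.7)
The plan is to reduce the theorem to two successive applications of the classical Pontryagin-Thom construction (\cref{PTconst}) via the singular-cobordism framework of \cref{singcob}. The key observation is that a nested $(\theta',\theta)$-submanifold $K'\subseteq K\subseteq M$ is, by \cref{PTconst} applied inside $K$, equivalent data to a $\theta$-submanifold $K\subseteq M$ together with a homotopy class of maps $K\to X:=\Th(\theta'^\ast\gamma_{d'})$; that is, a $\theta_X$-submanifold of $M$ in the sense of \cref{singcob}. A second application of \cref{PTconst} to the structure $\theta_X$, combined with the identification \eqref{eq9}, then yields the desired bijection with $[M,\,X_+\wedge \Th(\theta^\ast\gamma_d)]$.

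Explicitly, I would define the forward map by a composition of two Thom collapses: after choosing tubular neighbourhoods $\nu(K'\subseteq K)$ and $\nu(K\subseteq M)$, the lift of $\nu_{K'}^K$ defines $f\colon K\to X$ (collapsing the complement of $\nu(K'\subseteq K)$ to $\infty$), and then using the lift of $\nu_K^M$ one sends $(x,v)\in\nu(K\subseteq M)$ to $f(x)\wedge(\theta\text{-lift of }v)$, crushing $M\setminus\nu(K\subseteq M)$ to the basepoint. The inverse is iterated transversality applied to two Grassmannian approximations, one for each factor in the smash product: first transversality in $\Th(\theta^\ast\gamma_d)$ recovers $K\subseteq M$ with its $\theta$-lift, and then transversality of the induced map $K\to\Th(\theta'^\ast\gamma_{d'})$ recovers $K'\subseteq K$ with its $\theta'$-lift. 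Applying the same scheme to cobordisms and homotopies shows that these constructions descend to well-defined, mutually inverse maps on cobordism classes.

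For the group structure under $m-k_1>(m+1)/2$ (or $M=S^m$ with $m-k_1>1$), the classical argument carries over because $Y:=X_+\wedge\Th(\theta^\ast\gamma_d)$ is still $(d-1)$-connected (as follows from Hurewicz together with the Künneth formula for smash, since $\Th(\theta^\ast\gamma_d)$ is $(d-1)$-connected), so the inclusion $Y\vee Y\hookrightarrow Y\times Y$ retains the needed $(2d-1)$-connectivity. The stable statement follows by passing to $\mathrm{colim}_{n\to\infty}$ on both sides, using that smashing with $X_+$ commutes with the sequential colimit defining the homotopy groups of a Thom spectrum, so that $\mathrm{colim}_n\,\mathrm{NCob}^{(\theta',\theta(n))}(S^{k_1+n})\cong\mathrm{colim}_n\,\pi_{k_1+n}(X_+\wedge \Th(\theta(n)^\ast\gamma_n))=\pi_{k_1}(X_+\wedge \Th\Theta)$.

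The principal obstacle I anticipate is threading the two-stage transversality through boundary data: when recovering a nested cobordism $(W',W)$ from a homotopy of maps $M\times[0,1]\to X_+\wedge\Th(\theta^\ast\gamma_d)$, one needs $W'$ to embed in $W$ with the prescribed normal-bundle lift to $\theta'$ and to restrict to $K'\sqcup\widetilde K'$ on $\partial W$ compatibly with tubular collars. This is a relative transversality statement, standard in principle but requiring care to perform both levels simultaneously while respecting the prescribed boundary behaviour.
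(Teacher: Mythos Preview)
Your proposal is correct and follows essentially the same route as the paper: identify a nested $(\theta',\theta)$-submanifold with a $\theta_X$-submanifold for $X=\Th(\theta'^\ast\gamma_{d'})$ via \cref{PTconst} applied inside $K$, then apply \cref{PTconst} (resp.\ \cref{stablePTcons}) to the structure $\theta_X$ and invoke \eqref{eq9}. Your explicit description of the forward and inverse maps and your worry about relative transversality are more detailed than what the paper writes down, but the underlying argument is the same.
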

\begin{proof} Let $K$ be a $\theta$-submanifold of $M$. \cref{PTconst} tells us that maps $K\to \Th(\theta'^\ast\gamma_{d'})$ up to homotopy give us $\theta'$-submanifolds of $K$ up to cobordism. Hence, one could look at nested cobordism sets/groups as singular cobordism sets/groups in $\Th(\theta'^\ast\gamma_{d'})$ as in \cref{singcob}. That is,
\[
    \mathrm{NCob}^{(\theta',\theta)}(M)\cong\mathrm{Cob}^{\theta_{\Th(\theta'^\ast\gamma_{d'})}}(M).\]
Then, by \cref{PTconst}, the nested cobordism sets/groups will be isomorphic to homotopy classes of maps to some Thom space,
\[\mathrm{Cob}^{\theta_{\Th(\theta'^\ast\gamma_{d'})}}(M)\cong[M, \Th(\theta_{\Th(\theta'^\ast\gamma_{d'})}^\ast\gamma_{d})].\]
Lastly, we have seen in \eqref{eq9} that this Thom space has a particular form:
 \[[M, \Th(\theta_{\Th(\theta'^\ast\gamma_{d'})}^\ast\gamma_{d})]\cong[M,{\Th(\theta'^\ast\gamma_{d'})}_+\wedge \Th(\theta^\ast\gamma_{d})].\]
Composing the three isomorphisms above yields the first of the claims.\par
Similarly, in the stable case:
\[\Omega_{k_1}^{(\theta',\Theta)}\cong\Omega_{k_1}^{\Theta_{\Th(\theta'^\ast\gamma_{d'})}}\stackrel{\ref{stablePTcons}}{\cong}\pi_{k_1}({\Th(\theta'^\ast\gamma_{d'})}_+\wedge \Th\Theta).\qedhere\] 
\end{proof}
Loosely speaking, the cobordism class of a nested $(\theta',\theta)$-submanifold $K'\subseteq K$ of $M$ is sent to the homotopy class of a map $M\to \Th(\theta'^\ast\gamma_{d'})_+\wedge \Th(\theta^\ast\gamma_{d})$ in such a way that the inverse image of $\Th(\theta'^\ast\gamma_{d'})\times B$ is $K$ and the inverse image of $B'\times B$ is $K'$.\par 
This was already noted by Stong in \cite{cobofmaps}. There, he develops a notion of cobordism of maps: two continuous maps $f\colon K\to L$, $\widetilde{f}\colon \widetilde{K}\to\widetilde{L}$ between closed manifolds are cobordant if there is a cobordism $W$ between $K$ and $\widetilde{K}$, a cobordism $Y$ between $L$ and $\widetilde{L}$, and a map $F\colon W\to Y$ restricting to $f\sqcup\widetilde{f}$ on the boundary. Stong proves that the cobordism group $\Omega(m,n)$ of maps from an $m$-dimensional manifold to an $n$-dimensional manifold is isomorphic to a singular cobordism group as in \cref{singcob}: $\Omega(m,n)\cong\Omega_n^{\Theta({\mathbf{MO}})}$, where $\mathbf{MO}= \Omega^\infty\Sigma^{n-m}MO$ is the infinite loop space coming from a shift of the Thom spectrum. He furthermore adds that classes containing embeddings (which can be thought of classes represented by nested manifolds) come from $\Omega_n^{\Theta({\Th(\gamma_{n-m})})}\to\Omega_n^{\Theta({\mathbf{MO}})}$. The stable part of \cref{nestedcob} appears also in \cite[Lemma 8.4.9]{Wallbook}.\par
\begin{remark} If we wish to nest our manifolds even more, we just have to iterate this process. For example, if we set integers $0\leq k_3<k_2<k_1<m$ and structures $\theta\colon B\to BO(d)$, $\theta'\colon B'\to BO(d')$ and  $\theta''\colon B''\to BO(d'')$ as in \cref{def:structure} for $d=m-k_1$, $d'=k_1-k_2$, $d''=k_2-k_3$, a twice nested $(\theta'',\theta',\theta)$-submanifold of a closed $m$-manifold $M$ would be a $\theta$-submanifold $K$ of $M$ together with a $\theta'$-submanifold $K'$ of $K$ and a $\theta''$-submanifold $K''$ of $K'$. For $\Theta$ a stable structure as in \cref{def:stablestructure}, a twice nested $(\theta'',\theta',\Theta)$-manifold of dimensions $0\leq k_3<k_2<k_1$ would be a $\Theta$-manifold of dimension $k_1$ together with a $\theta'$-submanifold, which itself has a $\theta''$-submanifold. The twice nested cobordism sets/groups would then be:
\begin{align*}
    \mathrm{NCob}^{(\theta'',\theta',\theta)}(M)&\cong\mathrm{Cob}^{\theta_{\left(\Th(\theta''^\ast\gamma_{d''})_+\wedge\Th(\theta'^\ast\gamma_{d'})\right)}}(M)\\
    &\stackrel{\ref{PTconst}}{\cong}[M,\left(\Th(\theta''^\ast\gamma_{d''})_+\wedge\Th(\theta'^\ast\gamma_{d'})\right)_+\wedge \Th(\theta^\ast\gamma_{d})],\\
\Omega_{k_1}^{(\theta'',\theta',\Theta)}&\cong\Omega_{k_1}^{\Theta_{\left(\Th(\theta''^\ast\gamma_{d''})_+\wedge\Th(\theta'^\ast\gamma_{d'})\right)}}\\
&\stackrel{\ref{stablePTcons}}{\cong}\pi_{k_1}(\left(\Th(\theta''^\ast\gamma_{d''})_+\wedge\Th(\theta'^\ast\gamma_{d'})\right)_+\wedge\Th\Theta). 
\end{align*}
\end{remark}

\subsection{Splitting of the stable nested cobordism groups}\label{sec5}
In this section, we prove that the stable nested cobordism groups split as a direct sum of cobordism groups. Consider integers $k_1,k_2$ with $0\leq k_2<k_1$, a structure $\theta'\colon B'\to BO(d')$ as in \cref{def:structure} for $d'=k_1-k_2$ and a stable structure $\Theta$ as in \cref{def:stablestructure}.\par
First, let us see that the nested Thom spectra $\Th(\theta'^\ast\gamma_{d'})_+\wedge \Th\Theta$ of \cref{nestedcob} are involved in a convenient cofiber sequence.
\begin{proposition}\label[proposition]{split} There is a cofiber sequence of spectra admitting a retract:
    \begin{equation}\label{spliteq}
    \begin{tikzcd}
                \Th\Theta \arrow{r}{i} & \Th(\theta'^\ast\gamma_{d'})_+\wedge \Th\Theta \arrow[bend right, dashed]{l}[swap]{s} \arrow{r}{\mathrm{col}} & \Th(\theta'^\ast\gamma_{d'})\wedge \Th\Theta.
    \end{tikzcd}
\end{equation}
\end{proposition}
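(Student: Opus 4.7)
The plan is to exhibit the sequence \eqref{spliteq} as the result of smashing a universal split cofiber sequence of pointed spaces with the Thom spectrum $\Th\Theta$.

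For any pointed space $(X,x_0)$ there is a natural cofiber sequence
\[S^0 \xrightarrow{\iota} X_+ \xrightarrow{q} X,\]
where $\iota$ sends the non-basepoint of $S^0$ to $x_0$ and the basepoint to the disjoint basepoint $+$ of $X_+$, while $q$ collapses $+$ back to $x_0$; the quotient $X_+/\iota(S^0)$ identifies $+$ with $x_0$ and is thus canonically $X$. This cofibration admits a retraction $r\colon X_+\to S^0$ sending all of $X$ to the non-basepoint of $S^0$ and $+$ to the basepoint, so that $r\circ\iota=\mathrm{id}_{S^0}$. Equivalently, $X_+\simeq X\vee S^0$, and the displayed sequence picks out the two wedge summands.

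I would then apply this construction with $X=\Th(\theta'^\ast\gamma_{d'})$ (pointed at the point at infinity) and smash the sequence, together with its retract, with the Thom spectrum $\Th\Theta$, levelwise in its defining sequential spectrum. Since smashing with a fixed spectrum preserves cofiber sequences and retractions, and $S^0\wedge\Th\Theta\simeq\Th\Theta$, this produces exactly \eqref{spliteq} with $i=\iota\wedge\mathrm{id}_{\Th\Theta}$, $\mathrm{col}=q\wedge\mathrm{id}_{\Th\Theta}$, and $s=r\wedge\mathrm{id}_{\Th\Theta}$.

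The only remaining task is to verify that the derived maps agree with $i$, $\mathrm{col}$, and $s$ as displayed in the statement, which is immediate from unwinding the definitions. No genuine obstacle arises: the whole content of the proposition is the elementary splitting $X_+\simeq X\vee S^0$, transported to the spectrum level by smashing with $\Th\Theta$.
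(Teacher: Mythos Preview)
Your proposal is correct and is essentially identical to the paper's own proof: the paper also derives \eqref{spliteq} by smashing with $\Th\Theta$ the split cofiber sequence $S^0\to\Th(\theta'^\ast\gamma_{d'})_+\to\Th(\theta'^\ast\gamma_{d'})$, with the same description of the inclusion, retraction, and collapse maps.
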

\begin{proof} The result follows from smashing with $\Th\Theta$ the following cofiber sequence of pointed spaces admitting a retract:
    \begin{center}
    \begin{tikzcd}
                S^0 \arrow{r}{i'} & \Th(\theta'^\ast\gamma_{d'})_+\arrow[bend right, dashed]{l}[swap]{s'} \arrow{r}{\mathrm{col}'} & \Th(\theta'^\ast\gamma_{d'}),
    \end{tikzcd}
\end{center}
where $i'$ takes the non-basepoint to the infinity point of the Thom space, $s'$ sends the whole Thom space to the non-basepoint and $\mathrm{col}'$ is the quotient map.\end{proof}
Observe now that the spectrum on the right hand side of the cofiber sequence \eqref{spliteq} is also a Thom spectrum for some structure as a consequence of the following lemma by Atiyah \cite{atiyah}.
\begin{lemma}[{ \cite[Lemma 2.3]{atiyah}}]\label[lemma]{atiyah}
Let $A$ and $B$ be finite $CW$-complexes, $\alpha$ and $\beta$ vector bundles over $A$ and $B$ respectively. Then, the Thom space of the vector bundle $\alpha\times\beta$ over $A\times B$ is naturally homeomorphic to $\Th\alpha\wedge\Th\beta$.\end{lemma}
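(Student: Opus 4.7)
The plan is to prove this using the one-point compactification model of the Thom space, which is available because $A$ and $B$ are finite $CW$-complexes, hence compact Hausdorff; consequently, the total spaces $E(\alpha)$, $E(\beta)$, and $E(\alpha\times\beta)$ are all locally compact Hausdorff, and the Thom space of each bundle $\xi$ is naturally homeomorphic to the one-point compactification $E(\xi)^+$.

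First I would identify the total space of the external product: tautologically, $E(\alpha\times\beta)\cong E(\alpha)\times E(\beta)$, with the product topology. Next I would invoke the classical point-set identity
\[
(X\times Y)^+ \;\cong\; X^+\wedge Y^+
\]
valid for locally compact Hausdorff spaces $X$ and $Y$, where $(-)^+$ denotes one-point compactification and the basepoint of $X^+$ is the added point at infinity. Stringing these identifications together yields the chain
\[
\Th(\alpha\times\beta)\;\cong\; E(\alpha\times\beta)^+ \;\cong\; \bigl(E(\alpha)\times E(\beta)\bigr)^+ \;\cong\; E(\alpha)^+\wedge E(\beta)^+ \;\cong\; \Th\alpha\wedge\Th\beta.
\]
Naturality in the pair $(\alpha,\beta)$ is automatic from the functoriality of one-point compactification on proper maps and of the smash product.

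The only mildly delicate step is the identity $(X\times Y)^+\cong X^+\wedge Y^+$, which genuinely requires local compactness of $X$ and $Y$ (otherwise the collapsed subspace $X^+\vee Y^+$ inside $(X\times Y)^+$ need not be closed, and the quotient topology on the smash product fails to match the compactification topology). This is the sole place where compactness of $A$ and $B$ is used, explaining the finite-$CW$ hypothesis. As an alternative that sidesteps one-point compactifications entirely, one may choose Riemannian metrics and work with the disk-bundle model $\Th(\xi)=D(\xi)/S(\xi)$; the product metric gives $D(\alpha\times\beta)\cong D(\alpha)\times D(\beta)$ and $S(\alpha\times\beta)\cong S(\alpha)\times D(\beta)\cup D(\alpha)\times S(\beta)$, and the conclusion then follows from the standard $CW$-pair identity $(X\times Y)/(A\times Y\cup X\times B)\cong(X/A)\wedge(Y/B)$.
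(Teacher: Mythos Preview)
Your argument is correct. The paper does not supply its own proof of this lemma; it simply quotes the result from Atiyah's paper \cite[Lemma 2.3]{atiyah} and uses it as a black box. The one-point compactification argument you give is the standard route (and is essentially Atiyah's), and your remarks on why local compactness is needed, together with the disk-bundle alternative, are accurate. There is nothing to compare against in the paper itself.
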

The splitting of the stable nested cobordism groups now follows from \cref{split,atiyah}.
\begin{proposition}[ {\cite[Lemma 8.3.5]{Wallbook}}]\label[proposition]{wall} The stable nested cobordism group of $(\theta',\Theta)$-manifolds splits as a direct sum of stable cobordism groups:
    \[\Omega_{k_1}^{(\theta',\Theta)}\cong\Omega_{k_2}^{\theta'\times\Theta}\oplus\Omega_{k_1}^{\Theta}.\]\end{proposition}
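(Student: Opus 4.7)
The plan is to apply $\pi_{k_1}$ to the split cofiber sequence \eqref{spliteq} and then identify each of the three homotopy groups appearing with the cobordism groups in the statement. Since the cofiber sequence admits a retract, the induced long exact sequence of homotopy groups collapses into a short split exact sequence
\[\pi_{k_1}(\Th(\theta'^\ast\gamma_{d'})_+\wedge \Th\Theta)\cong \pi_{k_1}(\Th\Theta)\oplus \pi_{k_1}(\Th(\theta'^\ast\gamma_{d'})\wedge \Th\Theta),\]
so the whole proof reduces to naming the three terms correctly.

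The left-hand term is $\Omega_{k_1}^{(\theta',\Theta)}$ directly by the stable half of \cref{nestedcob}, and the first summand on the right is $\Omega_{k_1}^\Theta$ by \cref{stablePTcons}. The remaining task, which is the step with a bit of bookkeeping, is to show that
\[\pi_{k_1}(\Th(\theta'^\ast\gamma_{d'})\wedge \Th\Theta)\cong \Omega_{k_2}^{\theta'\times\Theta}.\]
Here I would use \cref{atiyah} levelwise: the $n$-th space of the spectrum $\Th(\theta'^\ast\gamma_{d'})\wedge \Th\Theta$ is
\[\Th(\theta'^\ast\gamma_{d'})\wedge \Th(\theta(n)^\ast\gamma_n)\cong \Th\bigl(\theta'^\ast\gamma_{d'}\times \theta(n)^\ast\gamma_n\bigr),\]
and the product bundle on the right is classified by $\theta'\times\theta(n)\colon B'\times B(n)\to BO(d'+n)$. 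Assembling these over $n$ yields precisely the Thom spectrum associated with the stable structure $\theta'\times\Theta$, shifted by $d'$.

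Tracking the shift is the main step to be careful about: the $m$-th space of $\Th(\theta'\times\Theta)$ is the Thom space of a rank-$m$ bundle, and it agrees with the $(m-d')$-th space of $\Th(\theta'^\ast\gamma_{d'})\wedge\Th\Theta$. Consequently
\[\pi_{k_2}(\Th(\theta'\times\Theta)) = \mathrm{colim}_m\, \pi_{k_2+m}\bigl(\Th(\theta'^\ast\gamma_{d'})\wedge \Th(\theta(m-d')^\ast\gamma_{m-d'})\bigr),\]
and setting $n=m-d'$ together with $k_1=k_2+d'$ shows that this colimit is $\pi_{k_1}(\Th(\theta'^\ast\gamma_{d'})\wedge \Th\Theta)$. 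A final application of \cref{stablePTcons} to the stable structure $\theta'\times\Theta$ identifies this with $\Omega_{k_2}^{\theta'\times\Theta}$, which completes the identification and hence the proof.
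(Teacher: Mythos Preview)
Your proof is correct and follows essentially the same route as the paper: apply $\pi_{k_1}$ to the split cofiber sequence of \cref{split}, identify the middle and outer terms via \cref{nestedcob} and \cref{stablePTcons}, and use \cref{atiyah} levelwise to recognize $\Th(\theta'^\ast\gamma_{d'})\wedge\Th\Theta$ as the $d'$-fold shift of $\Th(\theta'\times\Theta)$, so that $\pi_{k_1}$ of it becomes $\pi_{k_2}$ of the latter. The paper phrases the shift identification as $\Th(\theta'^\ast\gamma_{d'})\wedge \Th\Theta\cong\mathrm{sh}^{k_1-k_2}\Th(\theta'\times\Theta)$, which is exactly your bookkeeping paragraph said differently.
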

\begin{proof} Recall that $d'=k_1-k_2$. First, by \cref{nestedcob}, we know that the stable nested cobordism groups are stable homotopy groups of some spectrum that admits a splitting by \cref{split}:
\[\Omega_{k_1}^{(\theta',\Theta)}\cong\pi_{k_1}(\Th(\theta'^\ast\gamma_{d'})_+\wedge \Th\Theta)\cong\pi_{k_1}(\Th(\theta'^\ast\gamma_{d'})\wedge \Th\Theta)\oplus\pi_{k_1}(\Th\Theta).\]
Next, notice that $\Th(\theta'^\ast\gamma_{d'})\wedge \Th\Theta\cong\mathrm{sh}^{k_1-k_2}\Th(\theta'\times\Theta)$, where $\mathrm{sh}^k$ denotes the $k$-fold shift operator and the stable structure $\theta'\times\Theta$ is constructed with the maps
\[B'\times B(n)\stackrel{\theta'\times\theta(n)}{\longrightarrow}BO(k_1-k_2)\times BO(n)\to BO(k_1-k_2+n).\]
Indeed, 
\begin{align*}
(\Th(\theta'^\ast\gamma_{d'})\wedge \Th\Theta)_n&=\Th(\theta'^\ast\gamma_{d'})\wedge \Th(\theta(n)^\ast\gamma_n)\\
&\stackrel{\ref{atiyah}}{\cong} \Th(\theta'^\ast\gamma_{d'}\times \theta(n)^\ast\gamma_n)=\Th(\theta'\times\Theta)_{k_1-k_2+n},
\end{align*}
and the structure maps also agree. In consequence, \[\pi_{k_1}(\Th(\theta'^\ast\gamma_{d'})\wedge \Th\Theta)\cong\pi_{k_2}(\Th(\theta'\times\Theta)).\]

Lastly, \cref{PTconst} tells us that the stable homotopy groups obtained are classic stable cobordism groups:
\[\pi_{k_2}(\Th(\theta'\times\Theta))\oplus\pi_{k_1}(\Th\Theta)\cong\Omega_{k_2}^{\theta'\times\Theta}\oplus\Omega_{k_1}^{\Theta}.\qedhere\]
\end{proof}
Wall provides a geometric proof of this result in \cite{Wallpairs} and \cite[Chapter 8]{Wallbook}. In \cite{strong1971cobordism}, Stong generalized this result allowing additional structure on the lowest-dimensional manifold.

\section{Cobordism of nested manifolds versus cobordism of links}\label{sec3}

In this section, we will compare nested manifolds up to cobordism with links up to cobordism in the case that the highest-dimensional manifold of our nested manifold has a normal bundle with a framed direction.

\subsection{Cobordism of links}\label{subsec3.1}

Let us first define the cobordism sets of links inside a closed $m$-manifold $M$. Consider integers $k_1,k_2$ with $0\leq k_2, k_1<m$ and structures $\theta\colon B\to BO(d)$, $\theta'\colon B'\to BO(d')$ as in \cref{def:structure} for $d=m-k_1$, $d'=m-k_2$. 

\begin{definition}
A {\bf $(\theta',\theta)$-link inside $M$} is a pair of submanifolds of $M$, $K$ and $K'$, such that $K$ is a $\theta$-submanifold, $K'$ is a $\theta'$-submanifold and the two are disjoint. See \cref{fig2} for an example of a link inside $S^2$. \end{definition}
\begin{definition}
Two closed $(\theta',\theta)$-links $K\sqcup K'$ and $\widetilde{K}\sqcup \widetilde{K}'$ inside $M$ are {\bf cobordant} if there exist a $\theta$-cobordism $W\subseteq M\times[0,1]$ from $K$ to $\widetilde{K}$ and a $\theta'$-cobordism $W'\subseteq M\times[0,1]$ from $K'$ to $\widetilde{K}'$ as in \cref{def:cob} in such a way that $W\cap W'=\varnothing$. We will call $W\sqcup W'$ a $(\theta',\theta)$-cobordism between $K\sqcup K'$ and $\widetilde{K}\sqcup \widetilde{K}'$. See \cref{fig2} for an example of a cobordism of links inside $S^2\times[0,1]$.
\end{definition}
\begin{figure}
    \centering
    \includegraphics[scale=0.55]{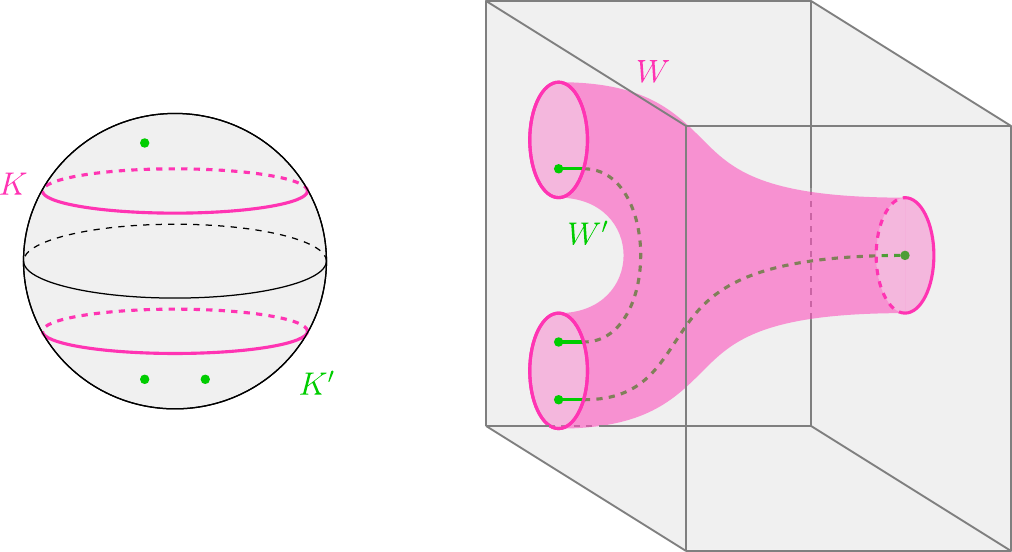}
    \caption{On the left, a link $K\sqcup K'$ inside $S^2$; on the right, a cobordism of links $W\sqcup W'$ inside $S^2\times[0,1]$.}
    \label{fig2}
\end{figure}
Again, $(\theta',\theta)$-cobordism of links is an equivalence relation. Let us denote the {\bf set of cobordism classes of $(\theta',\theta)$-links inside $M$} by $\mathrm{LCob}^{(\theta',\theta)}(M)$. As before, we get a group when we have big enough codimension $\min(d,d')>\frac{m+1}{2}$ or when $\min(d,d')>1$ and $M=S^m$.\par
One can perform a Pontryagin-Thom construction for links inside $M$. Our target spaces are wedges of Thom spaces. The following proposition is proved in \cite{haefligersteer} for the framed case and is used in \cite{geohilton,geohiltonthesis}.
\begin{proposition}\label[proposition]{linkPT} There is a bijection
\[\mathrm{LCob}^{(\theta',\theta)}(M)\cong\left[M, \Th(\theta'^\ast\gamma_{d'})\vee\Th(\theta^\ast\gamma_{d})\right],\]
which becomes a group isomorphism for $d,d'>\frac{m+1}{2}$ or for $M=S^m$ and $d,d'>1$.\end{proposition}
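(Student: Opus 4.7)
The plan is to mimic the classical Pontryagin--Thom construction of \cref{PTconst}, exploiting the fact that two disjoint closed submanifolds of $M$ admit disjoint tubular neighborhoods, which precisely corresponds to the wedge (rather than the product) on the homotopy side: the two collapse maps land in disjoint sub-Thom-spaces meeting only at the common infinity point.

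For the forward map, I would take a $(\theta',\theta)$-link $K\sqcup K'$ inside $M$ and choose disjoint tubular neighborhoods $N(K)$ and $N(K')$ in $M$, which is possible since $K\cap K'=\varnothing$ and both are closed. The classical PT collapse composed with the chosen lifts of the classifying maps of $\nu_K^M$ and $\nu_{K'}^M$ produces continuous maps $M\to\Th(\theta^\ast\gamma_d)$ and $M\to\Th(\theta'^\ast\gamma_{d'})$ that each send $M\setminus N(K)$ or $M\setminus N(K')$ to the infinity point. Since $N(K)$ and $N(K')$ are disjoint, at every point of $M$ at least one of the two maps is at its basepoint, so the pair glues to a single map $M\to\Th(\theta'^\ast\gamma_{d'})\vee\Th(\theta^\ast\gamma_d)$. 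The usual argument (different choices of tubular neighborhoods and lifts are related by concordance) shows the homotopy class is independent of choices, and applying the same construction to a link cobordism in $M\times[0,1]$ shows cobordant links produce homotopic maps.

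For the inverse map, I would take a representative $f\colon M\to\Th(\theta'^\ast\gamma_{d'})\vee\Th(\theta^\ast\gamma_d)$ and make it smooth and transverse to both zero sections $B\subset\Th(\theta^\ast\gamma_d)$ and $B'\subset\Th(\theta'^\ast\gamma_{d'})$ after factoring through finite Grassmannians as in the proof of \cref{PTconst}. The key observation is that the wedge point (the common infinity point) is disjoint from both zero sections, so near each zero section the wedge looks locally like the corresponding individual Thom space and standard transversality applies. Then $K:=f^{-1}(B)$ and $K':=f^{-1}(B')$ are automatically disjoint submanifolds of $M$ because $B$ and $B'$ intersect only potentially at the basepoint inside the wedge, and they are not there. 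Their lifted normal structures come from pulling back along the $\theta$ and $\theta'$ lifts, exactly as in the classical case, and a transverse homotopy gives a link cobordism.

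The two constructions are mutually inverse by the same argument as for \cref{PTconst}, carried out one Thom-space factor at a time. For the group structure, under the hypothesis $d,d'>\tfrac{m+1}{2}$ or $M=S^m$ with $d,d'>1$, each Thom space is simply connected (from the cell decomposition recalled before \cref{PTconst}), hence so is the wedge; moreover the inclusion of the wedge into the product of the two Thom spaces is $(\min(2d,2d')-1)$-connected, so maps from $M$ (or $M\times I$) into the product lift to the wedge. Exactly as in the classical discussion, this lets one define a binary operation on $[M,\Th(\theta'^\ast\gamma_{d'})\vee\Th(\theta^\ast\gamma_d)]$ by lifting a pair $(f,g)$ and folding, and a direct inspection shows this matches disjoint union of links. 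The main technical obstacle is the transversality step at the wedge, which I resolve by localizing near each zero section; otherwise the proof is a faithful two-component elaboration of \cref{PTconst}.
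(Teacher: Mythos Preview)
Your proposal is correct and follows essentially the same approach as the paper: both construct the forward map by collapsing disjoint tubular neighborhoods to the respective Thom-space summands of the wedge (sending the complement to the wedge point), and the inverse by factoring through finite Grassmannians, making the map transverse to both zero sections, and observing that the two preimages are disjoint because the zero sections are disjoint in the wedge. Your treatment of the group structure is slightly more detailed than the paper's (which simply notes the construction ``behaves well with respect to disjoint union'' and defers to the discussion preceding \cref{PTconst}); one small imprecision is that for the binary operation you should be lifting along $Y\vee Y\hookrightarrow Y\times Y$ with $Y=\Th(\theta'^\ast\gamma_{d'})\vee\Th(\theta^\ast\gamma_d)$, whose connectivity is $2\min(d,d')-1$, rather than along the inclusion of the wedge of the two Thom spaces into their product.
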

\begin{proof} The proof is very similar to that of \cref{PTconst}.\par
We define the map $\mathrm{LCob}^{(\theta',\theta)}(M)\to\left[M, \Th(\theta'^\ast\gamma_{d'})\vee\Th(\theta^\ast\gamma_{d})\right]$ in the following way. Take a $(\theta',\theta)$-link $K\sqcup K'$ inside $M$ representing a cobordism class and define a map $M\to \Th(\theta'^\ast\gamma_{d'})\vee\Th(\theta^\ast\gamma_{d})$ by sending a tubular neighbourhood of $K$ to $\Th(\theta^\ast\gamma_{d})$ and a tubular neighbourhood of $K'$ (disjoint to the one of $K$) to $\Th(\theta'^\ast\gamma_{d'})$ via the lifts of the classifying maps of their normal bundles inside $M$, and the rest of $M$ to the infinity point of the wedge. Applying the same technique to a cobordism of links, we get a homotopy between the maps we have constructed for each of the links. Hence, the map is well defined. Additionally, it behaves well with respect to disjoint union.\par
Let us now define the inverse. Take a map $M\to\Th(\theta'^\ast\gamma_{d'})\vee\Th(\theta^\ast\gamma_{d})$ representing a homotopy class. The composition $M\to \Th(\theta'^\ast\gamma_{d'})\vee\Th(\theta^\ast\gamma_{d})\to\Th(\gamma_{d'})\vee\Th(\gamma_{d})$ factors through $\Th(\gamma_{d',n})\vee\Th(\gamma_{d,n})$ for some $n$ as $M$ is compact. Choose the representative so that $M\to \Th(\gamma_{d',n})\vee\Th(\gamma_{d,n})$ is differentiable and transverse to $\mathrm{Gr}_{d'}(\mathbb{R}^n)$ and $\mathrm{Gr}_{d}(\mathbb{R}^n)$. Then, the inverse image of the Grassmannians gives us a $(\theta',\theta)$-link inside $M$. Applying the same technique to a homotopy between two maps, we get a $(\theta',\theta)$-cobordism between the resulting links. Hence, the inverse is well defined.\par
By construction, the composition
\[\mathrm{LCob}^{(\theta',\theta)}(M)\to\left[M, \Th(\theta'^\ast\gamma_{d'})\vee\Th(\theta^\ast\gamma_{d})\right]\to\mathrm{LCob}^{(\theta',\theta)}(M)\]
is the identity. The composition
\[\left[M, \Th(\theta'^\ast\gamma_{d'})\vee\Th(\theta^\ast\gamma_{d})\right]\to\mathrm{LCob}^{(\theta',\theta)}(M)\to\left[M, \Th(\theta'^\ast\gamma_{d'})\vee\Th(\theta^\ast\gamma_{d})\right]\]
is also the identity. Indeed, take a class $[f]\in\left[M, \Th(\theta'^\ast\gamma_{d'})\vee\Th(\theta^\ast\gamma_{d})\right]$ with a convenient representative $f$ as above. Applying the second map we have defined gives us a $(\theta',\theta)$-link $K\sqcup K'$ inside $M$. After applying the first map we have defined to this link, we obtain a map $g\colon M\to \Th(\theta'^\ast\gamma_{d'})\vee\Th(\theta^\ast\gamma_{d})$ with $f\big|_{K\sqcup K'}\simeq g\big|_{K\sqcup K'}$. Since a neighbourhood $N$ of $K\sqcup K'$ retracts onto the link, we also get that $f\big|_N\simeq g\big|_N$. Finally, the complement of $N$ is mapped to the contractible space $\left(\Th(\theta'^\ast\gamma_{d'})\vee\Th(\theta^\ast\gamma_{d})\right)\setminus(B'\sqcup B)$, so we can conclude that $[f]=[g]$.\end{proof}
\begin{remark}
By definition, a necessary condition for a link $K\sqcup K'$ inside $M$ to be nullbordant is that both submanifolds $K$, $K'$ of $M$ are nullbordant separately. Under \cref{linkPT}, the forgetful maps 
\begin{align*}
    \Delta_\iota\colon\mathrm{LCob}^{(\theta',\theta)}(M)&\to\mathrm{Cob}^{\theta}(M),& \Delta_{\iota'}\colon\mathrm{LCob}^{(\theta',\theta)}(M)&\to\mathrm{Cob}^{\theta'}(M)\\
    [K\sqcup K']&\mapsto[K]& [K\sqcup K']&\mapsto[K']
\end{align*}
correspond to the projections 
\[p\colon \Th(\theta'^\ast\gamma_{d'})\vee\Th(\theta^\ast\gamma_{d})\to\Th(\theta^\ast\gamma_{d}),\hspace{4em} p'\colon\Th(\theta'^\ast\gamma_{d'})\vee\Th(\theta^\ast\gamma_{d})\to \Th(\theta'^\ast\gamma_{d'}).\]
This is, the following diagram commutes:
\begin{equation}\label{square}
    \begin{tikzcd}        \left[M,\Th(\theta^\ast\gamma_{d-1})\vee\Th(\theta'^\ast\gamma_{d'-1})\right] \arrow{r}{p\circ -} \arrow{d}{\ref{linkPT}}[swap]{\cong} &  \left[M,\Th(\theta^\ast\gamma_{d-1})\right] \arrow{d}{\ref{PTconst}}[swap]{\cong} \\  \mathrm{LCob}^{(\theta',\theta)}(M)  \arrow{r}{\Delta_{\iota}}  &  \mathrm{Cob}^{\theta}(M),
    \end{tikzcd}
\end{equation}
and analogously for $\Delta_{\iota'}$ and $p'$.\par
In the next subsection, we will see that this necessary condition is not always sufficient.\end{remark}

\subsection{The Hilton-Milnor splitting and Wang's invariants for cobordism of links}
Fix integers $m,k_1,k_2$ with $0\leq k_2,k_1<m$ and consider structures $\theta\colon B\to BO(d)$ and $\theta'\colon B'\to BO(d')$ as in \cref{def:structure} for $d=m-k_1$ and $d'=m-k_2$.\par
In the previous subsection, we saw that the question 
\begin{center}``When is it sufficient for a $(\theta',\theta)$-link inside $S^m$ to be nullbordant that the $\theta$-link $K$\\
inside $S^m$ and the $\theta'$-link $K'$ inside $S^m$ are nullbordant separately?"\end{center}
is equivalent to the problem
\begin{center}
``When does the set of homotopy classes $[S^m,\Th(\theta'^\ast\gamma_{d'})\vee\Th(\theta^\ast\gamma_{d})]$ split as\\
the product of $[S^m,\Th(\theta'^\ast\gamma_{d'})]$ and $[S^m,\Th(\theta^\ast\gamma_{d})]$?''.
\end{center}
In the case our Thom spaces are suspensions, Hilton and Milnor \cite{hilton,milnor} gave a response to the latter, which shows that the cobordism class of a link is not necessarily determined by the cobordism classes of their components.\par
Let us hence restrict to the background manifold $S^m$ and the special case when our normal structures $\theta$, $\theta'$ factor over $BO(d-1)$, $BO(d'-1)$. That is, there is a commutative diagram
\begin{center}
    \begin{tikzcd}
        B \arrow{r}{\widetilde{\theta}} \arrow[bend right]{rr}{\theta} & BO(d-1) \arrow{r}{i} & BO(d),
    \end{tikzcd}
\end{center}
for $\theta$ and an analogous one for $\theta'$. In this situation, the submanifolds have a normal bundle with one framed direction and the Thom spaces become suspensions:
\[
\Th(\theta^\ast\gamma_{d})\cong\Th(\widetilde{\theta}^\ast(\varepsilon\oplus\gamma_{d-1}))\cong\Th(\varepsilon\oplus\widetilde{\theta}^\ast\gamma_{d-1})\cong\Sigma\Th(\widetilde{\theta}^\ast\gamma_{d-1}),\]
and analogously $\Th(\theta'^\ast\gamma_{d'})\cong\Sigma\Th(\widetilde{\theta}'^\ast\gamma_{d'-1})$. In particular, the homotopy groups of the Pontryagin-Thom spaces for links have a splitting due to Hilton and Milnor \cite{hilton,milnor}. Let us recall this result.\par
For that, we first need to define the generalized Whitehead products. The Whitehead product is a map
\[\pi_k(X)\times\pi_l(X)\to\pi_{k+l-1}(X),\]
with $X$ an arbitrary pointed topological space, 
that sends $([f],[g])$ to $[(f\vee g)\circ\varphi]$, where 
$\varphi\colon S^{k+l-1}\to S^k\vee S^l$ is the attaching map of the top-cell of the product $S^k\times S^l$. This generalizes to a map
\[[\Sigma Y,X]_\ast\times[\Sigma Y',X]_\ast\to[\Sigma(Y\wedge Y'),X]_\ast,\]
where $Y$ and $Y'$ are arbitrary pointed topological spaces, as follows. First, notice that the suspension $\Sigma Y$ is a co-H-space: it has a comultiplication map $\mu\colon \Sigma Y\to \Sigma Y\vee \Sigma Y$ and hence the set $[\Sigma Y, X]_\ast$ has a product structure given by $[f]\star[g]=[(f\vee g)\circ\mu]$, and the same happens to $[\Sigma Y',X]_\ast$. Now, the Puppe sequence associated to the cofiber sequence $Y\vee Y'\stackrel{j}{\to} Y\times Y'\stackrel{q}{\to} Y\wedge Y'$,
\[Y\vee Y'\stackrel{j}{\to} Y\times Y'\stackrel{q}{\to} Y\wedge Y'\stackrel{Qj}{\to} \Sigma(Y\vee Y') \stackrel{\Sigma j}{\to} \Sigma(Y\times Y')\stackrel{\Sigma q}{\to} \Sigma(Y\wedge Y')\to\ldots, \]
is such that $Qj$ is nullhomotopic (see \cite{puppe}). In particular, we get a short exact sequence
\[0\to[\Sigma(Y\wedge Y'),X]_\ast\to[\Sigma(Y\times Y'),X]_\ast\to[\Sigma(Y\vee Y'), X]_\ast\to0.\]
Let $\alpha\in[\Sigma Y,X]_\ast$ and $\alpha'\in[\Sigma Y',X]_\ast$ and consider the projections $\pi\colon Y\times Y'\to Y$ and $\pi'\colon Y\times Y'\to Y'$. The elements $\alpha\circ[\Sigma\pi], \alpha'\circ[\Sigma\pi']\in[\Sigma(Y\times Y'), X]_\ast$ are such that their commutator is $0$ when restricted to $\Sigma(Y\vee Y')$. Then, by exactness, there is a unique element $[\alpha,\alpha']\in[\Sigma(Y\wedge Y'),X]_\ast$ such that its restriction to $\Sigma(Y\times Y')$ gives us the aforesaid commutator. This element is the Whitehead product of $\alpha$ and $\alpha'$.\par
Take now $X=\Sigma Y\vee \Sigma Y'$. Since Whitehead products are related among themselves, there are some choices we need to make in order to define the Hilton-Milnor splitting. The basic Whitehead products of weight $1$ are $\iota\in[\Sigma Y,\Sigma Y\vee \Sigma Y']_\ast$ and $\iota'\in[\Sigma Y',\Sigma Y\vee \Sigma Y']_\ast$ represented by the inclusions
\[\Sigma Y\to \Sigma Y\vee \Sigma Y'\text{ and }\Sigma Y'\to\Sigma Y\vee\Sigma Y'.\]
Order them as $\iota<\iota'$. Let us now define the basic Whitehead products of weight $w>1$ inductively. Suppose that the basic products of weight $<w$ are defined and ordered. Then, the basic Whitehead products of weight $w$ are those of the form $[a,b]$ with $a$ a basic product of weight $u$, $b$ a basic product of weight $v$, $u+v=w$, $a<b$ and if $b=[c,d]$ then $c\leq a$. We order the basic products of weight $w$ arbitrarily among themselves and declare them greater than any basic product of lower weight.\par
For example, the only basic product of weight $2$ is $[\iota,\iota']$ and there are two basic products of weight $3$, $[\iota,[\iota,\iota']]$ and $[\iota',[\iota,\iota']]$. We order them arbitrarily among them and declare them greater than $\iota,\iota',[\iota,\iota']$. Depending on this ordering, the basic products of weight $4$ will be different.\par
Such a choice of basic Whitehead products is called a system of basic Whitehead products.
\begin{theorem}[{ \cite{hilton,milnor}}] Let $\Lambda$ be a system of basic Whitehead products. Then, there is an isomorphism
\begin{equation*} \pi_m(\Sigma Y\vee \Sigma Y') \xrightarrow{\bigoplus_{\lambda\in\Lambda}H_\lambda}\bigoplus_{\lambda\in\Lambda}\pi_m(\Sigma Y_\lambda)=\pi_m(\Sigma Y)\oplus\pi_m(\Sigma Y')\oplus\pi_m(\Sigma(Y\wedge Y'))\oplus\ldots,\end{equation*}
where $Y_\lambda$ denotes the smash product of as many copies of $Y$ (resp. $Y'$) as times as $\iota$ (resp. $\iota'$) appear in $\lambda$, and the inclusion of each of the summands is given by postcomposition with the Whitehead product $\lambda$.
\end{theorem}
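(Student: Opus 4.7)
The strategy is to reduce the claim to a statement about loop spaces via the suspension-loop adjunction, i.e.\ $\pi_m(\Sigma(Y\vee Y'))\cong\pi_{m-1}(\Omega\Sigma(Y\vee Y'))$, and then to exhibit a weak product decomposition of $\Omega\Sigma(Y\vee Y')$ indexed by the system of basic Whitehead products $\Lambda$. The map from right to left in the statement is canonical: for each basic product $\lambda\in\Lambda$, the Whitehead product itself defines $\lambda\colon\Sigma Y_\lambda\to\Sigma Y\vee\Sigma Y'$, and then $H_\lambda\colon\pi_m(\Sigma Y_\lambda)\to\pi_m(\Sigma Y\vee\Sigma Y')$ is postcomposition with $\lambda$. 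The content is that summing these maps yields an isomorphism.

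The first step is to recall the James model $J X\simeq\Omega\Sigma X$ for connected based $X$, together with James's splitting $\Sigma J X\simeq\bigvee_{n\geq 1}\Sigma X^{\wedge n}$. Applied to $X=Y\vee Y'$, the $n$-fold smash $X^{\wedge n}$ distributes over wedges and breaks into a wedge of smash products $Y_{\underline{\epsilon}}$, one for each word $\underline\epsilon\in\{0,1\}^n$ (with $Y_0=Y$, $Y_1=Y'$). Next I would construct, for each basic Whitehead product $\lambda$ of weight $n$, a map $J(Y_\lambda)\to\Omega\Sigma(Y\vee Y')$ adjoint to $\lambda\colon\Sigma Y_\lambda\to\Sigma Y\vee\Sigma Y'$. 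Multiplying all these maps together using the $H$-space structure of the target gives a weak product
\[
\Phi\colon\prod_{\lambda\in\Lambda}J(Y_\lambda)\;\longrightarrow\;\Omega\Sigma(Y\vee Y').
\]

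To prove $\Phi$ is a weak equivalence, the plan is to compute its effect on mod-$p$ or rational homology and invoke a Whitehead-type theorem. On homology, $H_*(\Omega\Sigma(Y\vee Y'))$ is the tensor algebra $T(\widetilde H_*(Y)\oplus\widetilde H_*(Y'))$ by James's theorem, while $H_*(J(Y_\lambda))$ is the tensor algebra on $\widetilde H_*(Y_\lambda)$. The Poincaré–Birkhoff–Witt theorem identifies $T(\widetilde H_*Y\oplus\widetilde H_*Y')$ with the universal enveloping algebra of the free Lie algebra $\mathcal L(\widetilde H_*Y\oplus\widetilde H_*Y')$; choosing a Hall basis for this free Lie algebra that is indexed by precisely the basic products $\lambda\in\Lambda$, one identifies the iterated commutators in the tensor algebra with the Hurewicz images of the Whitehead products $\lambda$. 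Hence $\Phi$ induces the inclusion of an ordered tensor-product basis on homology and is an isomorphism. Applying $\pi_{m-1}$ and using $\pi_{m-1}(J(Y_\lambda))\cong\pi_m(\Sigma Y_\lambda)$ yields the required isomorphism.

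The main obstacle is the combinatorial-algebraic input: one must verify that the chosen inductive ordering on basic Whitehead products in the paper does produce a Hall (or Lyndon–Shirshov) basis for the free Lie algebra on two generators, so that the above PBW argument matches up basis elements of $H_*$ with the summands on the right-hand side. A secondary technical point is controlling convergence of the weak product, which is harmless after taking $\pi_{m-1}$: for each $m$ only finitely many basic products $\lambda$ contribute, because the connectivity of $\Sigma Y_\lambda$ grows with the weight of $\lambda$, so $\pi_m(\Sigma Y_\lambda)=0$ once the weight is large enough. This finiteness turns the weak product into an honest direct sum in homotopy and completes the proof.
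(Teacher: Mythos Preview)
The paper does not actually prove this theorem: it is stated with a citation to \cite{hilton,milnor} and then immediately applied, with no argument given. So there is no ``paper's own proof'' to compare against; the result is simply quoted as a classical input.

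Your sketch is a recognisable outline of the standard modern proof (essentially Milnor's argument via the James construction and a PBW/Hall-basis identification on homology), and the overall strategy is sound. Two points worth flagging. First, the James model $JX\simeq\Omega\Sigma X$ and the connectivity growth you invoke at the end both require $Y$ and $Y'$ to be connected; the statement in the paper omits this hypothesis, but it is needed (and is satisfied in the paper's application, where the spaces are Thom spaces of positive-rank bundles). Second, the step ``one must verify that the chosen inductive ordering \dots\ does produce a Hall basis'' is genuinely the crux and is not just bookkeeping: the paper's definition of basic products is exactly the classical Hall condition, so this goes through, but you should say so rather than leave it as an obstacle. With those caveats, your proposal is a correct proof outline that goes well beyond what the paper itself supplies.
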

Applying this to $Y=\Th(\widetilde{\theta}^\ast\gamma_{d-1})$ and $Y'=\Th(\widetilde{\theta}'^\ast\gamma_{d'-1})$ gives us the desired splitting of the homotopy groups of the Pontryagin-Thom spaces for links:
\begin{align*}
\pi_m(\Sigma \Th(\widetilde{\theta}^\ast\gamma_{d-1})\vee \Sigma \Th(\widetilde{\theta}'^\ast\gamma_{d'-1}))
\cong& \pi_m(\Sigma \Th(\widetilde{\theta}^\ast\gamma_{d-1})) \oplus\pi_m(\Sigma \Th(\widetilde{\theta}'^\ast\gamma_{d'-1}))\\
&  \oplus\pi_m(\Sigma(\Th(\widetilde{\theta}^\ast\gamma_{d-1})\wedge \Th(\widetilde{\theta}'^\ast\gamma_{d'-1})))\oplus\ldots.\end{align*}
The first two terms in the above splitting are the ones corresponding to the square  \eqref{square}. In addition to this, Wang \cite{geohilton,geohiltonthesis} defined a geometric map
\begin{align*}
\Delta_{[\iota,\iota']}\colon \mathrm{LCob}^{(\theta',\theta)}(S^m)&\to \mathrm{Cob}^{i\circ(\widetilde{\theta}'\times\widetilde{\theta})}(S^m) \\
[K\sqcup K']&\mapsto [\tau(K,K')]
\end{align*}
as follows. For $K\sqcup K'$ a $(\theta',\theta)$-link inside $S^m$, let us construct $\tau(K,K')$, a $\left(i\circ(\widetilde{\theta}'\times\widetilde{\theta})\right)$-submanifold of $S^m$ disjoint to the link. Here we have abused notation and wrote $\widetilde{\theta}'\times\widetilde{\theta}$ for the composition
\[B'\times B\to BO(d'-1)\times BO(d-1)\to BO(d'+d-2).\]
Let $W'\subseteq S^m\times[0,1]$ be a $\theta'$-cobordism starting on $K'$, intersecting $W=K\times[0,1]$ transversely and such that $K\times\{1\}$ and $W'\cap(S^m\times\{1\})$ are separated by some equator $S^{m-1}\subseteq S^m\times\{1\}$. Then, $\overline{Z}=W\cap W'$ is a $(\theta'\times\theta)$-submanifold of $S^m\times[0,1]$. In particular, since both $W$ and $W'$ have a framed normal direction, $\overline{Z}$ will have two of them. By using the fact that $W=K\times[0,1]$ is a cylinder, we will be able to isotope $\overline{Z}$ to $S^m$ and forget one of its framed normal directions. Indeed, $\overline{Z}$ has a normal vector field $v$ coming from the framed normal direction in $K\times[0,1]$. Now, we can deform $K\times[0,1]$ to $K\times[0,\varepsilon]$ and then isotope $K\times[0,\varepsilon]$ inside $S^m$ by rotating $[0,\varepsilon]$ to the framed normal direction of $K$ inside $S^m$. By doing this, the deformation of $v$ lies on the negative part (outside) of $S^m\times[0,1]$ and hence the deformation of $\overline{Z}$ is naturally a $\left(i\circ(\widetilde{\theta}'\times\widetilde{\theta})\right)$-submanifold of $S^m$. We declare this deformation $\tau(K,K')$. See \cref{tau} for a drawing and \cite[Section 4.1]{geohiltonthesis} for a more precise definition.\par
Wang's construction gives an invariant for cobordism of links.
\begin{theorem}[{ \cite{geohiltonthesis}}]\label{invariant}Let $K\sqcup K'$ be a $(\theta',\theta)$-link inside $S^m$ with $\theta$ factoring over $BO(d-1)$ and $\theta'$ factoring over $BO(d'-1)$. We have that 
\[[K\sqcup K']=0\text{ in } \mathrm{LCob}^{(\theta',\theta)}(S^m)\xRightarrow{\quad}[K],[K'],[\tau(K,K')]=0.\]
\end{theorem}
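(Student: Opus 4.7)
The two vanishings $[K]=0$ and $[K']=0$ are essentially definitional: if $W\sqcup W'\subseteq S^m\times[0,1]$ is a nullbordism of $K\sqcup K'$ with $W\cap W'=\varnothing$, then $W$ is a $\theta$-nullbordism of $K$ and $W'$ is a $\theta'$-nullbordism of $K'$. Equivalently, this is what diagram \eqref{square} and its analogue for $\Delta_{\iota'}$ give when the top-left class is zero.

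For $[\tau(K,K')]=0$ the plan is to identify $\Delta_{[\iota,\iota']}$ with the Hilton--Milnor projection onto the $[\iota,\iota']$-summand. Using the factoring hypothesis, write $Y=\Th(\widetilde{\theta}^\ast\gamma_{d-1})$ and $Y'=\Th(\widetilde{\theta}'^\ast\gamma_{d'-1})$, so that $\Th(\theta^\ast\gamma_d)\simeq\Sigma Y$ and $\Th(\theta'^\ast\gamma_{d'})\simeq\Sigma Y'$. By \cref{linkPT}, the hypothesis $[K\sqcup K']=0$ is equivalent to nullhomotopy of the Pontryagin--Thom map $\varphi\colon S^m\to\Sigma Y'\vee\Sigma Y$. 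The Hilton--Milnor theorem then forces every component $H_\lambda(\varphi)$ to vanish, in particular $H_{[\iota,\iota']}(\varphi)\in\pi_m(\Sigma(Y\wedge Y'))$.

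Next I would interpret this zero geometrically. By \cref{atiyah} together with the factoring hypothesis, $\Sigma(Y\wedge Y')$ is the Thom space of $(\widetilde{\theta}'\times\widetilde{\theta})^\ast\gamma_{d'+d-2}\oplus\varepsilon$, which is precisely the Pontryagin--Thom target for $(i\circ(\widetilde{\theta}'\times\widetilde{\theta}))$-submanifolds of $S^m$. So \cref{PTconst} translates $H_{[\iota,\iota']}(\varphi)=0$ into the vanishing of a class $[Z]\in\mathrm{Cob}^{i\circ(\widetilde{\theta}'\times\widetilde{\theta})}(S^m)$, and the argument reduces to matching $[Z]$ with $[\tau(K,K')]$.

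This last matching is the main obstacle. The strategy is to recognise $[\iota,\iota']$ as the commutator obstruction, in the Puppe sequence of $Y\vee Y'\hookrightarrow Y\times Y'$, to extending $\iota\vee\iota'$ over $Y\times Y'$: under Pontryagin--Thom this obstruction is realised geometrically by the transverse intersection of tubular neighbourhoods of the two link components after pushing one off the other via a cobordism. This is exactly the intersection $\overline{Z}=W\cap W'$ in Wang's construction, and the subsequent isotopy rotating the cylindrical normal direction of $W=K\times[0,1]$ into the framed normal direction of $K\subseteq S^m$ realises the extra $\Sigma$ in $\Sigma(Y\wedge Y')$, producing $\tau(K,K')$. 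Carefully tracing this geometric-to-homotopical dictionary would complete the proof.
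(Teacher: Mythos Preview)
The paper does not give its own proof of this statement: it is quoted verbatim from Wang's thesis \cite{geohiltonthesis}, and the paper only records, in the paragraph following the theorem, the square (commuting up to sign) identifying $\Delta_{[\iota,\iota']}$ with the Hilton--Milnor projection $H_{[\iota,\iota']}$. Your plan is precisely to establish that square, so your strategy is aligned with the paper's point of view and with Wang's.

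That said, for the bare implication in \cref{invariant} your route is longer than necessary. The paper introduces $\Delta_{[\iota,\iota']}$ as a \emph{well-defined map on link cobordism classes} (this well-definedness, which is the real content and is what Wang proves, is asserted in the text just before the theorem). Granting that, the theorem is immediate: $\Delta_{[\iota,\iota']}$ sends the zero class to the cobordism class of $\tau(\varnothing,\varnothing)=\varnothing$, so $[K\sqcup K']=0$ forces $[\tau(K,K')]=\Delta_{[\iota,\iota']}(0)=0$; the vanishing of $[K]$ and $[K']$ you already handled. No Hilton--Milnor splitting and no matching of $\Delta_{[\iota,\iota']}$ with $H_{[\iota,\iota']}$ is required at this stage. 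The identification with $H_{[\iota,\iota']}$, and the full Hilton--Milnor decomposition, only become essential for the \emph{converse} direction in the framed case (\cref{wang}), where one needs the $\Delta_\lambda$ to jointly detect every class. So your plan is correct but proves more than is needed here; the heavy step you flag as ``the main obstacle'' is exactly the content of the square the paper states after the theorem, and it is a strictly stronger statement than \cref{invariant} itself.
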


\begin{example}\label[example]{favex1} Consider the framed link inside $S^m$ consisting of a framed equator $S^{m-1}\subseteq S^m$ and two disjoint oppositely framed points, one in each hemisphere of $S^m$. See \cref{tau} for an illustration of this in the case the background sphere has dimension $m=2$.\par
  This constitutes an example of a framed link $K\sqcup K'$ of $S^m$ such that $K$ and $K'$ are framed nullbordant separately, but it is not framed nullbordant as a link. Indeed,  $[K\sqcup K']\neq0$ in $\mathrm{LCob}^{(\ast,\ast)}(S^m)$ by \cref{invariant}, since $\tau(K,K')$ is a framed point, which is not nullbordant.\par Alternatively, one could argue as follows. Assume there is a nullbordism of framed links. By gluing to it a disc and a segment along their boundaries in such a way that the disc and the segment intersect transversely in a single point (as in \cref{tau} for $m=2$), we get a closed curve and a closed hypersurface in $S^m\times[0,1]$ that intersect transversely in an odd number of points. This cannot happen since the intersection number (modulo 2) is a homotopy invariant and we can homotope our curve and our hypersurface to be disjoint. We get to a contradiction and hence such a nullbordism cannot exist.\par
\begin{figure}
    \centering
    \includegraphics[scale=0.57]{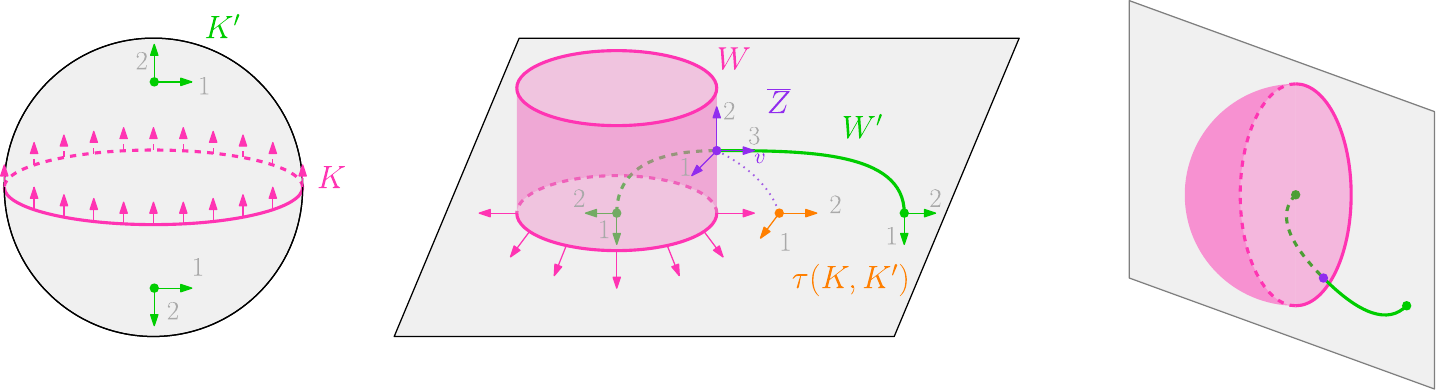}
    \caption{On the left, example of a link $K\sqcup K'$ such that both $K$ and $K'$ are nullbordant, but $K\sqcup K'$ is not nullbordant. The link consists of a pink circle $K$ framed inside $S^2$ in the direction of the pink arrows, and two green points $K'$ framed inside $S^2$ in the direction of the green arrows. In the middle, sketch of the computation of the invariant $\tau(K,K')$; since it is not nullbordant, $K\sqcup K'$ is not nullbordant. On the right, drawing of a nullbordism of $K$ and a nullbordism of $K'$ intersecting at a point.}
    \label{tau}
\end{figure}
This example proves that, in general, the $\tau$ invariant is not zero and the map
\begin{align*}
\mathrm{LCob}^{(\theta',\theta)}(M)  &\xrightarrow{(\Delta_{\iota'},\Delta_{\iota})}    \mathrm{Cob}^{\theta'}(M)\times\mathrm{Cob}^{\theta}(M)\\
        [K\sqcup K'] & \xmapsto{\hspace{1.1cm}} ([K'],[K])
\end{align*}
is not always an isomorphism. In particular, the cobordism sets of links $\mathrm{LCob}^{(\theta',\theta)}(M)$ do not necessarily split as $\mathrm{Cob}^{\theta'}(M)\times\mathrm{Cob}^{\theta}(M)$.\end{example}
Now, observe that
\[\Th\left(\left(i\circ(\widetilde{\theta}'\times\widetilde{\theta})\right)^\ast\gamma_{d+d'-1}\right)\cong\Sigma\Th\left((\widetilde{\theta}'\times\widetilde{\theta})^\ast\gamma_{d+d'-2}\right)\cong\Sigma\left(\Th(\widetilde{\theta}'^\ast\gamma_{d'-1})\wedge\Th(\widetilde{\theta}^\ast\gamma_{d-1})\right)\]
as a consequence of \cref{atiyah}. Then, for $k_1,k_2<m-1$, Wang's map fits into a square that commutes up to a sign (see \cite[Satz 3.2.2 and Satz 4.3.1]{geohiltonthesis} or \cite[Theorem 2.3]{geohilton} for the framed case):
\begin{center}
        \begin{tikzcd}      \pi_m(\Sigma\Th(\widetilde{\theta}^\ast\gamma_{d-1})\vee\Sigma\Th(\widetilde{\theta}'^\ast\gamma_{d'-1})) \arrow{r}{H_{[\iota,\iota']}} \arrow{d}{\ref{linkPT}}[swap]{\cong} &  \pi_m(\Sigma(\Th(\widetilde{\theta}^\ast\gamma_{d-1})\wedge \Th(\widetilde{\theta}'^\ast\gamma_{d'-1}))) \arrow{d}{\ref{PTconst}}[swap]{\cong} \\ \mathrm{LCob}^{(\theta',\theta)}(S^m)  \arrow{r}{\Delta_{[\iota,\iota']}}  &  \mathrm{Cob}^{i\circ(\widetilde{\theta}'\times\widetilde{\theta})}(S^m).
    \end{tikzcd}\end{center}
This means that we get a geometric interpretation of the Hilton-Milnor coefficient $H_{[\iota,\iota']}$ up to a sign.\par
Furthermore, if $\theta, \theta'\simeq\ast$, Wang \cite{geohilton} identified all Hilton maps $H_\lambda$ with geometric maps
\[ \Delta_\lambda\colon\mathrm{LCob}^{(\ast,\ast)}(S^m)\to \mathrm{Cob}^{\ast}(S^m)\]
as long as we have links of codimension higher than $1$. He hence describes a full set of invariants for a framed link inside $S^m$ of codimension larger than $1$ to be nullbordant.
\begin{theorem}[{ \cite{geohilton}}]\label[theorem]{wang}
Let $K\sqcup K'$ be a framed link inside $S^m$ of codimension larger than $1$. We have that 
\[[K\sqcup K']=0\text{ in } \mathrm{LCob}^{(\ast,\ast)}(S^m)\xLeftrightarrow{\quad}\Delta_\lambda([K\sqcup K'])=0\ \forall\lambda\in\Lambda,\]
for $\Lambda$ a system of basic Whitehead products.\end{theorem}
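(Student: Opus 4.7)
The plan is to reduce the statement to a purely homotopy-theoretic fact via the Pontryagin-Thom construction for links and then invoke the Hilton-Milnor splitting, with the main work being the geometric identification of each Hilton coefficient $H_\lambda$ with Wang's map $\Delta_\lambda$.

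First, since $\theta,\theta'\simeq\ast$ and the codimensions $d=m-k_1$ and $d'=m-k_2$ are at least $2$, Proposition \ref{linkPT} gives a group isomorphism
\[\mathrm{LCob}^{(\ast,\ast)}(S^m)\cong [S^m,S^{d'}\vee S^d]\cong\pi_m(S^{d'}\vee S^d),\]
where the last equality uses that $S^{d'}\vee S^d$ is simply connected. Thus $[K\sqcup K']=0$ in $\mathrm{LCob}^{(\ast,\ast)}(S^m)$ if and only if its image in $\pi_m(S^{d'}\vee S^d)$ is trivial.

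Next, I apply the Hilton-Milnor theorem with $Y=S^{d-1}$ and $Y'=S^{d'-1}$ (writing $S^d\cong\Sigma Y$ and $S^{d'}\cong\Sigma Y'$, valid since $d,d'\geq 2$). This yields an isomorphism
\[\pi_m(S^{d'}\vee S^d)\xrightarrow{\bigoplus_\lambda H_\lambda}\bigoplus_{\lambda\in\Lambda}\pi_m(S^{n_\lambda}),\]
where each $S^{n_\lambda}$ is the iterated smash product determined by the word $\lambda$. By the classical (framed) Pontryagin-Thom Theorem \ref{PTconst}, each summand $\pi_m(S^{n_\lambda})$ is identified with the framed cobordism group $\mathrm{Cob}^{\ast}(S^m)$ of framed $(m-n_\lambda)$-submanifolds of $S^m$. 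Consequently the vanishing of the class is equivalent to the vanishing of all Hilton coefficients $H_\lambda$, that is, of all the corresponding framed cobordism classes.

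The remaining and essential step is to identify, for every basic Whitehead product $\lambda\in\Lambda$, the homotopy-theoretic Hilton map $H_\lambda$ with Wang's geometric map $\Delta_\lambda$, up to sign, by means of a commuting square of the form
\begin{center}
\begin{tikzcd}
\pi_m(S^{d'}\vee S^d) \arrow{r}{H_\lambda} \arrow{d}[swap]{\cong} & \pi_m(S^{n_\lambda}) \arrow{d}[swap]{\cong}\\
\mathrm{LCob}^{(\ast,\ast)}(S^m) \arrow{r}{\Delta_\lambda} & \mathrm{Cob}^{\ast}(S^m).
\end{tikzcd}
\end{center}
The weight-one cases $\lambda=\iota,\iota'$ were already handled by the diagram \eqref{square}, and the weight-two case $\lambda=[\iota,\iota']$ by the square displayed just before this theorem, using the intersection construction $\tau(K,K')$. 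For a general basic product $\lambda$, one iterates the construction: geometrically, nested intersections of pushed-off cobordisms realize iterated Whitehead brackets, with the framed normal directions of each component providing the room needed to isotope the relevant intersections back into $S^m$ (exactly as in the weight-two case). Verifying that this iterated intersection procedure is well defined on cobordism classes, lands in the correct summand, and agrees up to sign with $H_\lambda$ is the crux of the argument. This is the content of Wang's work in \cite{geohilton,geohiltonthesis} and constitutes the main obstacle; once it is established, the equivalence in the theorem follows immediately by combining the Pontryagin-Thom identification, the Hilton-Milnor splitting, and the geometric description of each $H_\lambda$.
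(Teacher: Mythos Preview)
The paper does not actually prove this theorem; it is stated as a result of Wang \cite{geohilton} and cited without proof. Your outline is correct and matches precisely the surrounding discussion in the paper: reduce via \cref{linkPT} to $\pi_m(S^{d'}\vee S^d)$, apply Hilton--Milnor, and then invoke Wang's geometric identification of each $H_\lambda$ with a map $\Delta_\lambda$ up to sign. You are also right to flag that the substantive content---the construction of the higher $\Delta_\lambda$ and the verification of the commuting squares---is not reproduced here but deferred to \cite{geohilton,geohiltonthesis}; the paper only spells out the cases $\lambda=\iota,\iota'$ and $\lambda=[\iota,\iota']$ explicitly.
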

Wang did not explicitly find a full set of cobordism invariants of links in general when $\theta$, $\theta'$ factor over $BO(d-1)$, $BO(d'-1)$, but he claims that there are no essential difficulties to generalize the arguments used in the framed case.

\subsection{Turning nested manifolds into links}\label{subsec3.2}
Let $M$ be a closed $m$-manifold, fix integers $k_1,k_2$ with $0\leq k_2<k_1<m$ and consider structures $\theta\colon B\to BO(d)$ and $\theta'\colon B'\to BO(d')$ as in \cref{def:structure} for $d=m-k_1$ and $d'=k_1-k_2$. We will look at nested $(\theta',\theta)$-submanifolds of $M$ when $\theta$ factors over $BO(d-1)$:
\begin{center}
    \begin{tikzcd}
        B \arrow{r}{\widetilde{\theta}} \arrow[bend right]{rr}{\theta} & BO(d-1) \arrow{r}{i} & BO(d),
    \end{tikzcd}
\end{center}
that is, when the highest-dimensional manifold has a normal bundle with one framed direction.\par
Let $K'\subseteq K$ be a nested $(\theta',\theta)$-submanifold of $M$. Define its unnesting $\Upsilon(K'\subseteq K)$ as $K\sqcup K'$, where $K'$ has been displaced away from $K$ via the framed normal direction of $K$, as illustrated in \cref{fig4}. Notice that for $\theta'\times\theta$ the composition
\[B'\times B\to BO(d')\times BO(d)\to BO(d'+d),\]
$K'$ is naturally a  $(\theta'\times\theta)$-submanifold of $M$: the product of the lift of $\nu_{K'}^K$ to $B'$ and the lift of $\nu_K^M\big|_{K'}$ to $B$ gives a lift of $\nu_{K'}^M$ to $B\times B'$.\par
If $K'\subseteq K\subseteq M$ is cobordant to $\widetilde{K}'\subseteq \widetilde{K}\subseteq M$ through $W'\subseteq W\subseteq M\times[0,1]$, since $W$ is a $\theta$-submanifold of $M\times[0,1]$ and $\theta$ factors through $BO(d-1)$, $W$ also has a privileged normal direction and moving $W'$ away from $W$ via this direction gives us a cobordism from $\Upsilon(K'\subseteq K)$ to $\Upsilon(\widetilde{K}'\subseteq \widetilde{K})$. We then have a well-defined {\bf unnesting map}
\[\Upsilon\colon\mathrm{NCob}^{(\theta',\theta)}(M)\to\mathrm{LCob}^{(\theta'\times\theta,\theta)}(M).\]
This becomes a group homomorphism when the domain and the target are groups since it preserves disjoint union. We stress that we need the assumption on the structure $\theta$ for the map to be defined.\par 
  \begin{figure}
        \centering
    \includegraphics[scale=0.77]{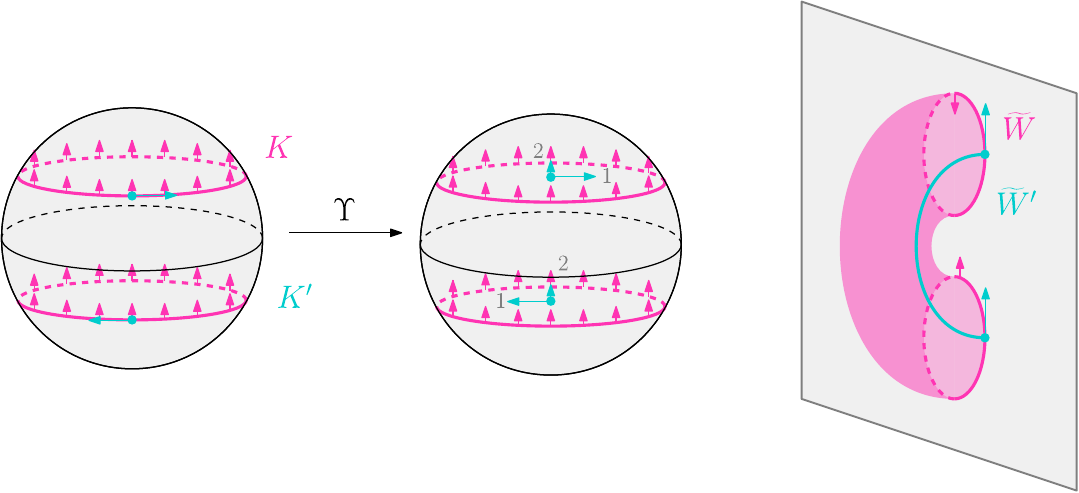}
    \caption{On the left, example of a nested submanifold $K'\subseteq K$ of $S^2$ such that both $K$ and $K'$ are nullbordant, but $K'\subseteq K$ is not nullbordant. The nested submanifold consists of two pink circles $K$ framed inside $S^2$ in the direction of the pink arrows, and two blue points $K'$ framed inside $K$ in the direction of the blue arrows. In the middle, sketch of the unnesting $\Upsilon(K'\subseteq K)$ of the nested manifold $K'\subseteq K$. On the right, drawing of a nullbordism of two circles with two points inside that does not extend to our framings. However, notice that if we flip the framing of one of our points and one of our circles, this nullbordism could be framed accordingly. This shows the importance of the framings.}
    \label{fig4}
  \end{figure} 
This subsection is devoted to proving the following theorem. 
\begin{theorem}\label[theorem]{nestsandknots} For $\theta$ factoring over $BO(d-1)$, the unnesting map
\[\Upsilon\colon\mathrm{NCob}^{(\theta',\theta)}(M)\to\mathrm{LCob}^{(\theta'\times\theta,\theta)}(M)\]
is bijective.\end{theorem}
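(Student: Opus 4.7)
The strategy is to realize $\Upsilon$ as the map induced, via the Pontryagin-Thom identifications, by a homotopy equivalence of target spaces. By \cref{nestedcob}, $\mathrm{NCob}^{(\theta',\theta)}(M)\cong[M,\,{\Th(\theta'^\ast\gamma_{d'})}_+\wedge\Th(\theta^\ast\gamma_d)]$; by \cref{linkPT} together with \cref{atiyah},
\[\mathrm{LCob}^{(\theta'\times\theta,\theta)}(M)\cong[M,\,\Th((\theta'\times\theta)^\ast\gamma_{d+d'})\vee\Th(\theta^\ast\gamma_d)]\cong[M,\,(\Th(\theta'^\ast\gamma_{d'})\wedge\Th(\theta^\ast\gamma_d))\vee\Th(\theta^\ast\gamma_d)].\]
Smashing the split cofiber sequence of pointed spaces displayed in the proof of \cref{split} with $\Th(\theta^\ast\gamma_d)$ (in place of $\Th\Theta$) yields the homotopy equivalence
\[{\Th(\theta'^\ast\gamma_{d'})}_+\wedge\Th(\theta^\ast\gamma_d)\simeq \Th(\theta^\ast\gamma_d)\vee(\Th(\theta'^\ast\gamma_{d'})\wedge\Th(\theta^\ast\gamma_d)),\]
so the two target sets of $\Upsilon$ are abstractly in bijection; what remains is to check that this bijection is realized geometrically by $\Upsilon$.

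For this I would trace a representative nested submanifold $K'\subseteq K\subseteq M$ through both sides. The proof of \cref{nestedcob} presents the nested PT map as a singular-cobordism PT map in $\Th(\theta'^\ast\gamma_{d'})$: $M$ is collapsed outside a tubular neighborhood of $K$; the complement of a tubular neighborhood of $K'$ inside that of $K$ is mapped to the $\Th(\theta^\ast\gamma_d)$ wedge summand via $\nu_K^M$; and the tubular neighborhood of $K'$ in $M$, which via the decomposition $\nu_{K'}^M\cong \nu_{K'}^K\oplus(\nu_K^M)|_{K'}$ and \cref{atiyah} is the total space of $\theta'^\ast\gamma_{d'}\times\theta^\ast\gamma_d$, is mapped to $\Th(\theta'^\ast\gamma_{d'})\wedge\Th(\theta^\ast\gamma_d)$. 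On the link side, the unnesting $\Upsilon(K'\subseteq K)=K\sqcup K''$ (with $K''$ the translate of $K'$ along the framed direction of $\nu_K^M$) has link PT map sending tubular neighborhoods of $K$ and of $K''$ to the same two wedge summands, using the product decomposition on $\nu_{K''}^M$ inherited from $\nu_{K'}^K\oplus(\nu_K^M)|_{K'}$.

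\textbf{The main obstacle} will be verifying that these two PT maps agree up to based homotopy on the tubular neighborhood of (the relevant copy of) $K'$. The crucial point is that the displacement $K'\rightsquigarrow K''$ is performed by translating along the trivial line bundle $\varepsilon\subseteq\nu_K^M$ furnished by the factoring $\theta=i\circ\widetilde{\theta}$, and that the \emph{same} framed direction supplies a canonical identification of the tubular neighborhood of $K'$ in $M$ with that of $K''$ in $M$ compatible with both splittings. Once this coherence is in place, the diagram
\begin{center}\begin{tikzcd}
\mathrm{NCob}^{(\theta',\theta)}(M) \arrow{r}{\Upsilon} \arrow{d}[swap]{\cong} & \mathrm{LCob}^{(\theta'\times\theta,\theta)}(M) \arrow{d}{\cong} \\
{[M,\,{\Th(\theta'^\ast\gamma_{d'})}_+\wedge\Th(\theta^\ast\gamma_d)]} \arrow{r}{\simeq} & {[M,\,(\Th(\theta'^\ast\gamma_{d'})\wedge\Th(\theta^\ast\gamma_d))\vee\Th(\theta^\ast\gamma_d)]}
\end{tikzcd}\end{center}
commutes, and $\Upsilon$ is bijective because the bottom arrow is induced by a homotopy equivalence of spaces. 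In particular, both surjectivity (every link with a compatible normal splitting arises, up to cobordism, by unnesting) and injectivity (a link cobordism between unnestings pulls back to a nested cobordism) follow at once.
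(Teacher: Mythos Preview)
Your overall architecture matches the paper's: reduce to a homotopy equivalence between the two Pontryagin--Thom targets and then check that $\Upsilon$ realizes it geometrically. There is, however, a genuine gap in how you obtain the equivalence
\[
\Th(\theta'^\ast\gamma_{d'})_+\wedge\Th(\theta^\ast\gamma_d)\;\simeq\;\Th(\theta^\ast\gamma_d)\vee\bigl(\Th(\theta'^\ast\gamma_{d'})\wedge\Th(\theta^\ast\gamma_d)\bigr).
\]
You deduce it by smashing the split cofiber sequence $S^0\to X_+\to X$ from the proof of \cref{split} with the \emph{space} $\Th(\theta^\ast\gamma_d)$. But a retract in a cofiber sequence of pointed spaces does \emph{not} yield a wedge decomposition unstably; that implication is a stable phenomenon, which is precisely why \cref{split} is formulated for spectra. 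Concretely, $X_+\wedge Z\simeq (X\wedge Z)\vee Z$ fails for general $Z$: with $X=S^1$ and $Z=\mathbb{CP}^2$, the product of the degree~$2$ and degree~$3$ classes is nonzero in $H^\ast(S^1_+\wedge\mathbb{CP}^2)$ but vanishes in $H^\ast(\Sigma\mathbb{CP}^2\vee\mathbb{CP}^2)$. Notice that, as written, your derivation of the equivalence never uses the hypothesis that $\theta$ factors through $BO(d-1)$---a warning sign, since without that hypothesis the statement is false.

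The paper uses the hypothesis exactly here: since $\theta=i\circ\widetilde\theta$, one has $\Th(\theta^\ast\gamma_d)\cong\Sigma\Th(\widetilde\theta^\ast\gamma_{d-1})$, so the smash $\Th(\theta'^\ast\gamma_{d'})_+\wedge\Th(\theta^\ast\gamma_d)$ can be rewritten as $\Sigma(\Th(\theta'^\ast\gamma_{d'})_+)\wedge\Th(\widetilde\theta^\ast\gamma_{d-1})$, and the elementary \emph{unstable} splitting $\Sigma(X_+)\simeq\Sigma X\vee S^1$ of \cref{propsusp} is then applied and smashed back. The second clause of \cref{propsusp}, identifying the projection $\Sigma(X_+)\to S^1$ with the collapse of $\Sigma X$, is what makes the commutativity check go through cleanly. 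Your geometric commutativity sketch is reasonable in spirit, but be careful: before the equivalence is in place there is no ``$\Th(\theta^\ast\gamma_d)$ wedge summand'' inside $\Th(\theta'^\ast\gamma_{d'})_+\wedge\Th(\theta^\ast\gamma_d)$ to refer to, only the subspace $\{\infty\}_+\wedge\Th(\theta^\ast\gamma_d)$; the paper's collapse/projection comparison handles this without ambiguity.
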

In particular, this implies that the nested cobordism sets/groups $\mathrm{NCob}^{(\theta',\theta)}(M)$ do not necessarily split as $\mathrm{Cob}^{\theta'\times\theta}(M)\times\mathrm{Cob}^{\theta}(M)$ when $\theta$ factors over $BO(d-1)$, since the following triangle commutes:
\begin{center}
\begin{tikzcd}
    \mathrm{LCob}^{(\theta'\times\theta,\theta)}(M) \arrow{rd}{(\Delta_{\iota'},\Delta_{\iota})}  & \\
    \mathrm{NCob}^{(\theta',\theta)}(M) \arrow{r} \arrow{u}{\Upsilon} & \mathrm{Cob}^{\theta'\times\theta}(M)\times\mathrm{Cob}^{\theta}(M)\\[-3ex]
        {[K'\subseteq K]} \arrow[mapsto]{r} & {([K'],[K])}.
\end{tikzcd}
\end{center}
For our goal, we will focus on the nested Pontryagin-Thom spaces ${\Th(\theta'^\ast\gamma_{d'})}_+\wedge \Th(\theta^\ast\gamma_{d})$ from \cref{nestedcob}. As in the previous section, in this situation one of the Thom spaces involved is a suspension:
\begin{equation}\label{eq1}
\Th(\theta^\ast\gamma_{d})\cong\Sigma\Th(\widetilde{\theta}^\ast\gamma_{d-1}).
\end{equation}
Let us summarize some properties of suspensions that will be useful for us in the following lemma.
\begin{lemma}\label[lemma]{propsusp} For $(X,x_0)$ a pointed CW-complex and $X_+=(X\sqcup\ast,\ast)$, we have that:
\renewcommand{\theenumi}{\alph{enumi}}
\begin{enumerate}
\item $\Sigma(X_+)\simeq \Sigma X\vee S^1$.
\item For $\pi\colon \Sigma(X_+)\cong X_+\wedge S^1\to S^1$ the projection onto $S^1$, we have that the composition of $\pi$ with the homotopy equivalence $\Sigma X\vee S^1\stackrel{\simeq}{\to}\Sigma(X_+)$ is homotopic to the map $\Sigma X\vee S^1\to S^1$ collapsing $\Sigma X$. 
\item For $\widetilde{\pi}\colon \Sigma(X_+)\to \Sigma X$ the suspension of the pointed map $X_+\to X$ that is the identity on $X$, we have that the composition of $\widetilde{\pi}$ with the homotopy equivalence $\Sigma X\vee S^1\stackrel{\simeq}{\to}\Sigma(X_+)$ is homotopic to the map $\Sigma X\vee S^1\to \Sigma X$ collapsing $S^1$. 
\end{enumerate}
\end{lemma}
\begin{proof}
For the first claim, we will proceed along the lines of \cite[Example 0.8]{hatcher}. Consider $SX$ the unreduced suspension of $X$ and attach a segment to its extrema, obtaining a space $SX\cup_{S^0}[0,1]$. By collapsing $\{x_0\}\times [0,1]\subseteq SX$, we get $\Sigma X\vee S^1$; on the other hand, by collapsing $[0,1]$ we get $\Sigma(X_+)$. Since in both cases we are collapsing a contractible CW-subcomplex of our initial complex, we see that both $\Sigma(X_+)$ and $\Sigma X\vee S^1$ are homotopy equivalent to $SX\cup_{S^0}[0,1]$ (see e.g. \cite[page 11]{hatcher}) and hence homotopy equivalent to each other. This is illustrated in \cref{fig7}.\par
\begin{figure}
    \centering
    \includegraphics[scale=0.6]{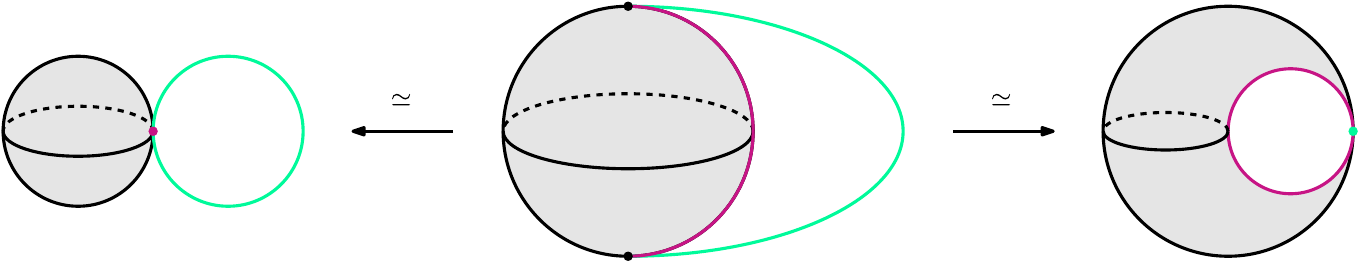}
    \caption{Drawing of $\Sigma X\vee S^1\simeq SX\cup_{S^0}[0,1]\simeq\Sigma(X_+)$ for $X=S^1$.}
    \label{fig7}
\end{figure}
The second claim follows from observing that the composition of $\pi$ with the homotopy equivalence $SX\cup_{S^0}[0,1]\stackrel{\simeq}{\to}\Sigma(X_+)$ is homotopic to the map $SX\cup_{S^0}[0,1]\to S^1$ collapsing $SX$ via the homotopy
\begin{align*}
    H\colon SX\cup_{S^0}[0,1]\times[0,1]&\to S^1\\
    ([x,s],t)&\mapsto e^{\pi it+2\pi i(1-t)s} \text{ if $[x,s]\in SX$},\\
    (s,t)&\mapsto e^{\pi it(1-2s)} \text{ if $s\in[0,1]$}.
\end{align*}
The third claim follows from observing that the composition of $\widetilde{\pi}$ with the homotopy equivalence $SX\cup_{S^0}[0,1]\stackrel{\simeq}{\to}\Sigma(X_+)$ is the collapse of $\{x_0\}\times[0,1]\cup_{S^0}[0,1]$.
\end{proof}
Now, we are ready to prove our theorem.
\begin{proof}[Proof of \cref{nestsandknots}] 
Let us first see that 
\begin{equation}\label{eqq}
{\Th(\theta'^\ast\gamma_{d'})}_+\wedge \Th(\theta^\ast\gamma_{d})\simeq\Th((\theta'\times\theta)^\ast\gamma_{d+d'})\vee\Th(\theta^\ast\gamma_{d}).\end{equation}
Indeed, we know that one of the Thom spaces involved is a suspension (see \eqref{eq1}), that a suspension is a smash product with the circle $S^1$ and that the smash product is associative and commutative. Hence, 
\[{\Th(\theta'^\ast\gamma_{d'})}_+\wedge \Th(\theta^\ast\gamma_{d})\cong{\Th(\theta'^\ast\gamma_{d'})}_+\wedge \Sigma\Th(\widetilde{\theta}^\ast\gamma_{d-1})\cong \Sigma({\Th(\theta'^\ast\gamma_{d'})}_+)\wedge \Th(\widetilde{\theta}^\ast\gamma_{d-1}).\]
In this situation, we can use \cref{propsusp} and the distributivity of the smash product over the wedge sum:
\begin{align*}
\Sigma({\Th(\theta'^\ast\gamma_{d'})}_+)\wedge \Th(\widetilde{\theta}^\ast\gamma_{d-1})&\simeq(\Sigma\Th(\theta'^\ast\gamma_{d'})\vee S^1)\wedge\Th(\widetilde{\theta}^\ast\gamma_{d-1})\\
    &\cong (\Sigma\Th(\theta'^\ast\gamma_{d'})\wedge\Th(\widetilde{\theta}^\ast\gamma_{d-1}))\vee (S^1\wedge \Th(\widetilde{\theta}^\ast\gamma_{d-1})).
\end{align*}
Next, we can use again that smashing with $S^1$ means suspending, that the smash product is commutative and that by \eqref{eq1} one of our Thom spaces is a suspension: 
\begin{align*} (\Sigma\Th(\theta'^\ast\gamma_{d'})\wedge\Th(\widetilde{\theta}^\ast\gamma_{d-1}))\vee (S^1\wedge \Th(\widetilde{\theta}^\ast\gamma_{d-1}))    &\cong (\Th(\theta'^\ast\gamma_{d'})\wedge\Sigma\Th(\widetilde{\theta}^\ast\gamma_{d-1}))\vee \Sigma\Th(\widetilde{\theta}^\ast\gamma_{d-1})\\
    &\cong(\Th(\theta'^\ast\gamma_{d'})\wedge\Th(\theta^\ast\gamma_{d}))\vee \Th(\theta^\ast\gamma_{d}).
\end{align*}
Lastly, by \cref{atiyah}, we know that the smash product of two Thom spaces is again a Thom space:
\[(\Th(\theta'^\ast\gamma_{d'})\wedge\Th(\theta^\ast\gamma_{d}))\vee \Th(\theta^\ast\gamma_{d})\cong\Th((\theta'\times\theta)^\ast\gamma_{d+d'})\vee\Th(\theta^\ast\gamma_d).\]
The composition of all of the above equivalences yields the claim. Now, the theorem is result of the commutativity of the following diagram:
\begin{center}
    \begin{tikzcd}
        \mathrm{NCob}^{(\theta',\theta)}(M) \arrow{r}{\Upsilon} \arrow{d}{\cong}[swap]{\ref{nestedcob}} & \mathrm{LCob}^{(\theta'\times\theta,\theta)}(M) \arrow{d}{\cong}[swap]{\ref{linkPT}}\\
        \left[M, {\Th(\theta'^\ast\gamma_{d'})}_+\wedge \Th(\theta^\ast\gamma_{d})\right] \arrow{r}{\cong}[swap]{\eqref{eqq}} & \left[M, \Th((\theta'\times\theta)^\ast\gamma_{d+d'})\vee\Th(\theta^\ast\gamma_{d})\right].
    \end{tikzcd}
\end{center}
Let us see it indeed commutes.\par
Take $K'\subseteq K$ a nested $(\theta',\theta)$-submanifold of $M$. The left vertical arrow sends it to a map $\varphi\colon M\to{\Th(\theta'^\ast\gamma_{d'})}_+\wedge \Th(\theta^\ast\gamma_{d})$ such that the inverse image of $\Th(\theta'^\ast\gamma_{d'})\times B$ is $K$ and the inverse image of $B'\times B$ is $K'$. Taking the inverse image of $\Th(\theta'^\ast\gamma_{d'})\times B$ under $\varphi$ is equivalent to taking the inverse image of $B$ under $p\circ \varphi$ for $p\colon {\Th(\theta'^\ast\gamma_{d'})}_+\wedge \Th(\theta^\ast\gamma_{d})\to\Th(\theta^\ast\gamma_{d})$ the projection map. Taking the inverse image of $B'\times B$ under $\varphi$ is equivalent to taking the inverse image of $B'\times B$ under $\widetilde{p}\circ \varphi$ for $\widetilde{p}\colon {\Th(\theta'^\ast\gamma_{d'})}_+\wedge \Th(\theta^\ast\gamma_{d})\to \Th(\theta'^\ast\gamma_{d'})\wedge \Th(\theta^\ast\gamma_{d})$ the projection map.\par
Consider now the unnesting $\Upsilon(K'\subseteq K)=K'\sqcup K$. The right vertical arrow sends it to a map $\psi\colon M\to \Th((\theta'\times\theta)^\ast\gamma_{d+d'})\vee\Th(\theta^\ast\gamma_{d})$ such that the inverse image of $B'\times B$ is $K'$ and the inverse image of $B$ is $K$. Taking the inverse image of $B$ under $\psi$ is equivalent to taking the inverse image of $B$ under $q\circ\psi$ for $q\colon \Th((\theta'\times\theta)^\ast\gamma_{d+d'})\vee\Th(\theta^\ast\gamma_{d})\to\Th(\theta^\ast\gamma_{d})$ the map collapsing $\Th((\theta'\times\theta)^\ast\gamma_{d+d'})$. Taking the inverse image of $B'\times B$ under $\psi$ is equivalent to taking the inverse image of $B'\times B$ of $\widetilde{q}\circ\psi$ for $\widetilde{q}\colon \Th((\theta'\times\theta)^\ast\gamma_{d+d'})\vee\Th(\theta^\ast\gamma_{d})\to\Th((\theta'\times\theta)^\ast\gamma_{d+d'})$ the map collapsing $\Th(\theta^\ast\gamma_{d})$. \par
But by \cref{propsusp}, the composition of $p$ with the homotopy equivalence 
\[\Th((\theta'\times\theta)^\ast\gamma_{d+d'})\vee\Th(\theta^\ast\gamma_{d})\simeq {\Th(\theta'^\ast\gamma_{d'})}_+\wedge \Th(\theta^\ast\gamma_{d})\] is homotopic to $q$. Analogously, the composition of $\widetilde{p}$ with the above homotopy equivalence is homotopic to $\widetilde{q}$. This proves the commutativity of the diagram.
\end{proof}
\cref{nestsandknots} says that, when $\theta$ factors over $BO(d-1)$, the only difference between the nested Pontryagin-Thom construction and the Pontryagin-Thom construction for links is that in the former we are taking nested preimages and in the latter we are taking disjoint preimages; that is, we are unnesting our nested manifolds.\par
As a consequence of \cref{nestsandknots}, we can apply Wang's invariants of \cref{wang,invariant} to nested manifolds.
\begin{corollary}\label[corollary]{wang2} Let $K'\subseteq K$ be a nested $(\theta',\theta)$-submanifold of $S^m$, with $\theta$ factoring over $BO(d-1)$. We have that 
\[[K'\subseteq K]=0\text{ in } \mathrm{NCob}^{(\theta',\theta)}(S^m)\xRightarrow{\quad} [K],[K'],\Delta_{[\iota,\iota']}(\Upsilon[K'\subseteq K])=0.\]
\end{corollary}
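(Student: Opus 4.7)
The plan is to deduce the corollary as a straightforward combination of \cref{nestsandknots} with Wang's invariant (\cref{invariant}). The key observation is that the hypothesis ``$\theta$ factoring over $BO(d-1)$'' is exactly what is needed both to define $\Upsilon$ and to ensure that Wang's invariant applies to the unnested link.

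First, I would use that the unnesting map is well-defined on cobordism classes (and in fact preserves the zero element, since a nested nullbordism of $K'\subseteq K$ yields, upon displacing the lower-dimensional cobordism along the framed normal direction, an ordinary link nullbordism of $\Upsilon(K'\subseteq K)$). Hence the assumption $[K'\subseteq K]=0$ gives $\Upsilon[K'\subseteq K]=0$ in $\mathrm{LCob}^{(\theta'\times\theta,\theta)}(S^m)$.

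Second, I would check that the hypotheses of \cref{invariant} are met for the link $\Upsilon(K'\subseteq K)$, which carries the structures $\theta$ on $K$ and $\theta'\times\theta$ on $K'$. By assumption, $\theta$ factors over $BO(d-1)$. For the $(\theta'\times\theta)$-structure of codimension $d+d'$ on $K'$, the required factorization through $BO(d+d'-1)$ comes from
\[
B'\times B \xrightarrow{\theta'\times\widetilde\theta} BO(d')\times BO(d-1) \longrightarrow BO(d+d'-1) \xrightarrow{\ i\ } BO(d+d'),
\]
so both components of the link have a framed normal direction.

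Applying \cref{invariant} to $\Upsilon[K'\subseteq K]=0$ then yields that $[K]$, the $(\theta'\times\theta)$-cobordism class of $K'$ in $S^m$, and $\Delta_{[\iota,\iota']}(\Upsilon[K'\subseteq K])$ all vanish, which is exactly the three conclusions of the corollary. There is no real obstacle here; the only point to verify carefully is that the class denoted $[K']$ in the statement is indeed the $(\theta'\times\theta)$-class induced by combining the $\theta'$-structure on $\nu_{K'}^K$ with the (restricted) $\theta$-structure on $\nu_K^{S^m}|_{K'}$, but this is precisely how $\Upsilon$ equips the displaced $K'$ with a structure as a submanifold of $S^m$, so the identification is built into the definition of $\Upsilon$.
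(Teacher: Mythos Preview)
Your proposal is correct and follows the same route as the paper: the corollary is stated there without a displayed proof, merely as an immediate consequence of \cref{nestsandknots} together with Wang's \cref{invariant}. Your write-up actually supplies a detail the paper leaves implicit, namely the verification that $\theta'\times\theta$ factors through $BO(d+d'-1)$ so that \cref{invariant} applies to the unnested link; and you correctly observe that only the well-definedness of $\Upsilon$ (not its bijectivity) is needed for this direction.
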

\begin{corollary}\label[corollary]{wang3}
Let $K'\subseteq K$ be a framed nested submanifold of $S^m$ where $K$ has codimension larger than $1$. We have that 
\[[K'\subseteq K]=0\text{ in } \mathrm{NCob}^{(\ast,\ast)}(S^m)\xLeftrightarrow{\quad}\Delta_\lambda(\Upsilon[K'\subseteq K])=0\ \forall\lambda\in\Lambda,\]
for $\Lambda$ a system of basic Whitehead products.
\end{corollary}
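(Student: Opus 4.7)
The plan is to deduce this corollary by chaining together \cref{nestsandknots} (the unnesting map is a bijection when the top-dimensional manifold has a framed normal direction) and \cref{wang} (Wang's full set of Hilton-type invariants detects nullbordism for framed links of codimension larger than $1$). The framed nested setting corresponds to taking $\theta = \theta' = \ast$, and the trivial structure $\ast\colon\ast\to BO(d)$ factors (trivially) over $BO(d-1)$, so the hypothesis of \cref{nestsandknots} is satisfied.

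First, I would invoke \cref{nestsandknots} in the special case $\theta = \theta' = \ast$ to conclude that the unnesting map
\[
\Upsilon\colon\mathrm{NCob}^{(\ast,\ast)}(S^m)\longrightarrow\mathrm{LCob}^{(\ast\times\ast,\ast)}(S^m)=\mathrm{LCob}^{(\ast,\ast)}(S^m)
\]
is a bijection; in particular, $[K'\subseteq K]=0$ in $\mathrm{NCob}^{(\ast,\ast)}(S^m)$ if and only if $\Upsilon[K'\subseteq K]=0$ in $\mathrm{LCob}^{(\ast,\ast)}(S^m)$. I would then check that the codimension hypothesis transfers correctly: $K$ has codimension $d=m-k_1>1$ by assumption, and after unnesting, $K'$ becomes a framed submanifold of $S^m$ of codimension $m-k_2>m-k_1>1$, so every component of the framed link $\Upsilon[K'\subseteq K]$ has codimension larger than $1$ in $S^m$.

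With this, I would apply \cref{wang} to the framed link $\Upsilon[K'\subseteq K]$, which gives that its cobordism class vanishes if and only if $\Delta_\lambda(\Upsilon[K'\subseteq K])=0$ for every $\lambda\in\Lambda$. Composing the two equivalences produces exactly the biconditional in the statement.

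I do not expect a genuinely hard step here, since both ingredients are already in place; the only subtlety worth writing out carefully is the verification that the codimension assumption on $K$ yields the codimension hypothesis needed for \cref{wang} on both components of the unnested link, together with the identification $\mathrm{LCob}^{(\ast\times\ast,\ast)}(S^m)=\mathrm{LCob}^{(\ast,\ast)}(S^m)$ that allows Wang's theorem to apply to the unnesting.
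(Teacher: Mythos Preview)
Your proposal is correct and matches the paper's approach: the corollary is stated immediately after the sentence ``As a consequence of \cref{nestsandknots}, we can apply Wang's invariants of \cref{wang,invariant} to nested manifolds,'' with no further proof given, so the intended argument is precisely the chaining of \cref{nestsandknots} (for $\theta=\theta'=\ast$) with \cref{wang} that you describe. Your extra care in checking the codimension hypothesis on both components of the unnested link and the identification $\mathrm{LCob}^{(\ast\times\ast,\ast)}(S^m)=\mathrm{LCob}^{(\ast,\ast)}(S^m)$ is a welcome elaboration of what the paper leaves implicit.
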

  \begin{example}\label[example]{favex2} 
  Consider the nested framed submanifold of $S^m$ consisting of two copies of $S^{m-1}$ with the same framing and with one framed point inside each, the points having opposite framings. See \cref{fig4} for an illustration of this in the case the background sphere has dimension $m=2$. This constitutes an example of a nested framed submanifold $K'\subseteq K$ of $S^m$ such that $K$ and $K'$ are framed nullbordant separately, but together are not nested framed nullbordant. Indeed, $\Upsilon[K'\subseteq K]$ is the framed link cobordism class of the link in \cref{favex1}, hence nonzero.
\end{example}
This example is a witness of the unstable nested cobordism sets $\mathrm{NCob}^{(\theta',\theta)}(M)$ not necessarily splitting as $\mathrm{Cob}^{\theta'\times\theta}(M)\times\mathrm{Cob}^{\theta}(M)$.
\begin{remark}
To see that the nested manifold in \cref{fig4} is not nullbordant, one could also argue directly, without involving links, as follows. Assume that there exists a nested framed cobordism $W'\subseteq W$ inside $S^2\times[0,1]$ with
\[\partial W'=W'\cap(S^2\times\{0\})=K',\ \partial W=W\cap(S^2\times\{0\})=K.\]
Now, there exists a submanifold $\widetilde{W}'\subseteq \widetilde{W}$ of $S^2\times[-1,0]$ such that $\widetilde{W}'$ is a segment, $\widetilde{W}$ is a cylinder, and
\[ \partial \widetilde{W}'=\widetilde{W}'\cap(S^2\times\{0\})=K',\ \partial \widetilde{W}=\widetilde{W}\cap(S^2\times\{0\})=K\]
if we forget about the framings (as on the right of \cref{fig4}).
   Let us now glue $\widetilde{W}'\subseteq\widetilde{W}$ and $W'\subseteq W$ along $K'\subseteq K$.    
   In particular, we get a closed surface $\widetilde{W}\cup_{K}W$ embedded in $S^2\times[-1,1]$ that is non-orientable since a tubular neighbourhood of $\widetilde{W}'\cup_{K'}W'$ constitutes an embedding of a M\"{o}bius strip. We hence get to a contradiction because non-orientable closed surfaces cannot be embedded in $\mathbb{R}^3$ (see e.g. \cite{samelson}). Therefore, no nested framed nullbordism $W'\subseteq W$ exists. \par
\end{remark}
\begin{remark} \cref{nestsandknots} also explains why stable nested cobordism groups split in this case: the unnesting map is compatible with the inclusions $S^m\hookrightarrow S^{m+1}$, and stably, we can make our links disjoint enough so that as long as each component is nullbordant, the link is nullbordant as well. Besides, this viewpoint illustrates that wedges of suspensions of Thom spaces are stably equivalent to the product of those.
\end{remark}

\subsection{Non-linked example}\label{subsec3.3}
\cref{wall} tells us that nested cobordism groups split stably, that is, when we consider nested manifolds without a fixed background manifold. However, in \cref{subsec3.2} we have seen that when we consider nested submanifolds of spheres where the highest-dimensional submanifold has a normal bundle with a framed direction, the unstable nested cobordism sets have interesting geometric invariants that obstruct the splitting. We do this via unnesting our nested manifolds and turning them into links. In the case there is no such framed direction, there is no hope of having a well-defined unnesting map nor any other bijection between nested cobordism sets and cobordism sets of links, as the following example shows.
\begin{figure}
    \centering
    \includegraphics[scale=0.7]{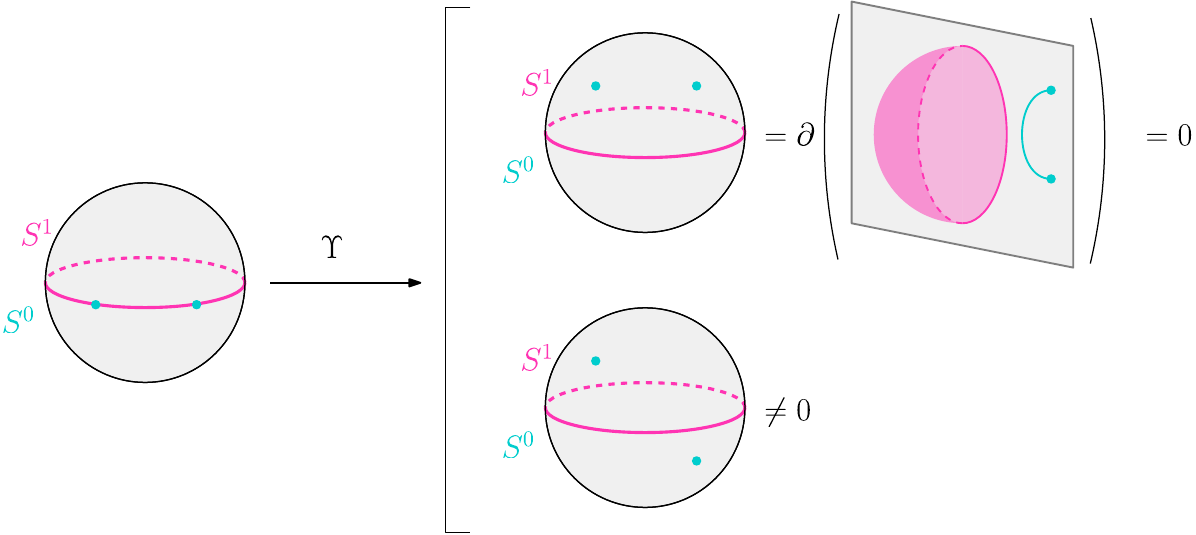}
    \caption{The unnesting map is not well-defined when we do not have a framed normal direction on our highest-dimensional manifold: there are two options for unnesting $S^0\subseteq S^1\subseteq S^2$: one of them is nullbordant and the other one is not.}
    \label{newfig}
\end{figure}
\begin{example} For $\theta=\mathrm{id}\colon BO(1)\to BO(1)$, $\theta'=\mathrm{id}\colon BO(m-1)\to BO(m-1)$, consider the nested $(\theta',\theta)$ -submanifold of $S^m$ consisting on the equator $S^{m-1}$ with two points $S^0$ inside, as in \cref{newfig} for $m=2$. Then, there are essentially two options for its unnesting $\Upsilon(S^0\subseteq S^{m-1})=S^0\sqcup S^{m-1}$:
\begin{itemize}
    \item Both points $S^0$ lie on the same hemisphere of $S^m$, in which case $S^0\sqcup S^{m-1}$ is nullbordant since it bounds the disjoint union of a segment and a disc.
    \item Each point of $S^0$ lies on a different hemisphere of $S^m$, in which case $S^0\sqcup S^{m-1}$ is not nullbordant by a similar argument to that of \cref{favex1}.
\end{itemize}
In fact, we do not only not have an unnesting map, but the nested cobordism sets and the cobordism sets of links do not coincide in this case:
\begin{itemize}
    \item $\mathrm{NCob}^{(\theta',\theta)}(S^2)$ has two elements. Indeed, a nested $(\theta',\theta)$-submanifold of $S^2$ consists of a disjoint union of circles with points inside; circles with an even number of points are nullbordant, circles with an odd number of points are cobordant among them and two circles with one point inside each are nullbordant.
    \item $\mathrm{LCob}^{(\theta'\times\theta,\theta)}(S^2)$ has at least three elements. Indeed, the empty link, the link consisting of a point and an empty $1$-manifold and the link in the bottom right of \cref{newfig} are three $(\theta'\times\theta,\theta)$-links inside $S^2$ that are pairwise not cobordant and hence represent three different classes in $\mathrm{LCob}^{(\theta'\times\theta,\theta)}(S^2)$.
\end{itemize}\end{example}
Even though this situation is more puzzling, we still know that the unstable nested cobordism sets/groups do not necessarily split in this case as well. Here is an example of the nonsplitting of the unstable nested cobordism sets even when the highest-dimensional manifold does not have a normal bundle with a framed direction.
\begin{example}\label[example]{favex3} For $\theta=\mathrm{id}\colon BO(1)\to BO(1)$ and $\theta'\simeq\ast\colon\ast\to BO(1)$, the $(\theta',\theta)$-submanifold $K'\subseteq K$ of $S^2$ in \cref{fig6} is such that $K'$ is a nullbordant $(\theta'\times\theta)$-submanifold of $S^2$ and $K$ is a nullbordant $\theta$-submanifold of $S^2$. However, $K'\subseteq K$ is not nested $(\theta',\theta)$-nullbordant.\par
\begin{figure}
    \centering
    \includegraphics[scale=0.8]{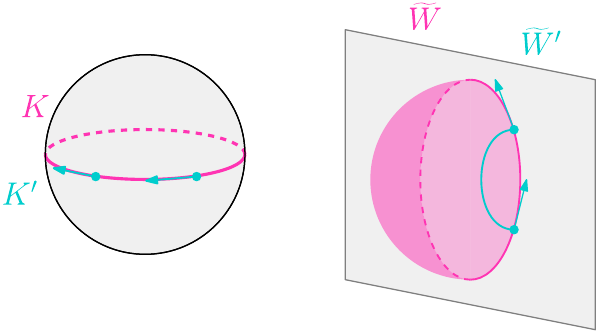}
    \caption{On the left, example of a nested submanifold $K'\subseteq K$ of $S^2$ such that both $K$ and $K'$ are nullbordant, but $K'\subseteq K$ is not nullbordant. The nested submanifold consists of a pink circle $K$ and two blue points $K'$ framed inside $K$ in the direction of the blue arrows. On the right, drawing of a nullbordism of a circle with two points inside that does not extend to our framings.}
    \label{fig6}
\end{figure}
Indeed, assume that there exists a nested nullbordism, that is, a $(\theta',\theta)$-cobordism $W'\subseteq W$ inside $S^2\times[0,1]$ with
\[\partial W'=W'\cap(S^2\times\{0\})=K',\ \partial W=W\cap(S^2\times\{0\})=K.\]
Now, choose manifolds $\widetilde{W}'\subseteq \widetilde{W}$ inside $S^2\times[-1,0]$ such that $\widetilde{W}'$ is a segment, $\widetilde{W}$ is a disc, and
\[ \partial \widetilde{W}'=\widetilde{W}'\cap(S^2\times\{0\})=K',\ \partial \widetilde{W}=\widetilde{W}\cap(S^2\times\{0\})=K\]
if we forget about the framings (as on the right of \cref{fig6}).
   Let us now glue $\widetilde{W}'\subseteq\widetilde{W}$ and $W'\subseteq W$ along $K'\subseteq K$.    
   In particular, we get a closed surface $\widetilde{W}\cup_{K}W$ embedded in $S^2\times[-1,1]$ that is non-orientable since a tubular neighbourhood of $\widetilde{W'}\cup_{K'}W'$ constitutes an embedding of a M\"{o}bius strip. We hence get to a contradiction because closed non-orientable surfaces cannot be embedded in $\mathbb{R}^3$ (see e.g. \cite{samelson}). Therefore, no nested nullbordism $W'\subseteq W$ exists.\end{example}

\newcommand{\etalchar}[1]{$^{#1}$}
\providecommand{\bysame}{\leavevmode\hbox to3em{\hrulefill}\thinspace}
\providecommand{\MR}{\relax\ifhmode\unskip\space\fi MR }
\providecommand{\MRhref}[2]{%
  \href{http://www.ams.org/mathscinet-getitem?mr=#1}{#2}
}
\providecommand{\href}[2]{#2}

\end{document}